\newtheorem{theorem}{Theorem}[section]
\newtheorem{proposition}[theorem]{Proposition}
\newtheorem{lemma}[theorem]{Lemma}
\newtheorem{corollary}[theorem]{Corollary}
\newtheorem{remark}[theorem]{Remark}
\newtheorem{example}[theorem]{Example}
\newtheorem{definition}[theorem]{Definition}
\newtheorem*{rep@theorem}{\rep@title}
\newcommand{\newreptheorem}[2]{%
\newenvironment{rep#1}[1]{%
 \def\rep@title{#2 \ref{##1}}%
 \begin{rep@theorem}}%
 {\end{rep@theorem}}}
\def\li{\mathtt{i}}
\def\lj{\mathtt{j}}
\def\lk{\mathtt{k}}
\def\w{\mathtt{w}}
\def\la{\mathtt{a}}
\def\lb{\mathtt{b}}
\def\v{\mathtt{v}}
\def\u{\mathtt{u}}
\def\e{\mathtt{e}}
\def\itensor{\smallblackcircle}
\def\linearmap{B}
\newcommand{\Tminus}[1]{\mathop{\mathrm{T}^{-}_{#1}}}
\newcommand{\Tplus}[1]{\mathop{\mathrm{T}^{+}_{#1}}}
\def\halfshuffle{\mathbin{\succ}}
\def\varhalfshuffle{\mathbin{\curlyeqsucc}}
\def\hsm{\Lambda}
\begin{document}

\title{Signatures of paths transformed by polynomial maps}
\author{Laura Colmenarejo\footnote{Laura Colmenarejo, UMass Amherst (USA), \texttt{laura.colmenarejo.hernando@gmail.com}}\ \ \  and\ \  Rosa Prei\ss\footnote{Rosa Prei\ss , Technical University Berlin (Berlin, Germany), \texttt{preiss@math.tu-berlin.de}}}
\date{}
\maketitle{}

\begin{abstract}
We characterize the signature of piecewise continuously differentiable paths transformed by a polynomial map in terms of the signature of the original path. For this aim, we define recursively an algebra homomorphism between two shuffle algebras on words. This homomorphism does not depend on the path and behaves well with respect to composition and homogeneous maps. It allows us to describe the relation between the signature of a piecewise continuously differentiable path and the signature of the path obtained by transforming it under a polynomial map. We also study this map as a half-shuffle homomorphism and give a generalization of our main theorem in terms of Zinbiel algebras. 
\end{abstract}

\emph{Keywords}: Signature tensors, iterated integrals, tensor algebra, shuffle product, polynomial maps.

\section{Introduction}

In the 1950s, K. T. Chen introduced the \emph{iterated-integral signature} of a piecewise continuously differentiable path, which up to a natural equivalence relation, determines the initial path. In general, the signature of a path can be seen as a multidimensional time series. When the terminal time is fixed, the signature of a path can be seen as tensors and the calculation of the signature becomes a standard problem in data science. In \cite{PSS18}, M. Pfeffer, A. Seigal, and B. Sturmfels study the inverse problem: given partial information from a signature, can we recover the path? They consider signature tensors of order three under linear transformations and establish identifiability results and recovery algorithms for piecewise linear paths, polynomial paths, and generic dictionaries.

Coming from stochastic analysis, the signatures are becoming more relevant in other areas, such as algebraic geometry and combinatorics, and we would like to highlight some recent work. 
For instance, in \cite{DR18}, J. Diehl and J. Reizenstein offer a combinatorial approach to the understanding of invariants of multidimensional time series based on their signature. Another reference is \cite{AFS18}, in which C. Am\'endola, P. Friz, and B. Sturmfels look at the varieties of signatures of tensors for both deterministic and random paths, focusing on piecewise linear paths and polynomials paths, among others. Answering one of their questions, in \cite{G18}, F. Galuppi looks at rough paths, for which their signature variety shows surprising analogies with the Veronese variety.

In stochastic analysis, the study of the signatures  of paths arises in the theory of rough paths, where  \cite{FV10,FH14} are textbook references. Iterated integrals and the non-commutative series that encode them have also arisen in a variety of contexts in geometry and arithmetic, including the work of R. Hain in \cite{H02}, M. Kapranov in \cite{K09}, and J. Balakrishnan in \cite{B13}. The results we derive in this paper have the potential for future applications in all of these contexts.

Let us now present our problem and our main two results, Theorems \ref{thm: mainquestion} and \ref{thm: main}.

A piecewise continuously differentiable path $X$ in $\mathbb{R}^d$ is a map defined by $d$ piecewise smooth functions $X^{i}(t)$ in a parameter $t\in[0,L]$, for $i=1,\dots,d$. Its signature stores the collection of all the iterated integrals of the path $X$, which are of the form 
\begin{eqnarray}\label{iteratedintegral}
\int_0^L \int_0^{r_n}\dots\int_0^{r_2} dX_{r_1}^{i_1}\dots dX_{r_n}^{i_n},
\end{eqnarray}
where $X^i_r := X^i(r)$. The iterated integral \eqref{iteratedintegral} is a real number and it is associated to the sequence $(i_1,i_2\dots,i_n)$, for which the order is relevant. Therefore, we consider the signature $\sigma(X)$ as an element of $\mathrm{T}((\mathbb{R}^d))$, the space of formal power series in words in the alphabet $\{\mathtt{1},\mathtt{2},\dots,\mathtt{d}\}$. This space becomes an algebra with the concatenation product, denoted by the symbol $\itensor$. Its algebraic dual, denoted by $\mathrm{T}(\mathbb{R}^d)$, is the space of non-commutative polynomials in the same set of words. It is a commutative algebra with the shuffle product, which is denoted by $\shuffle$ and interleaves two words in all order-preserving ways, \cite{Reu93}. 

We also consider the following duality paring in $\mathrm{T}((\mathbb{R}^d))\times \mathrm{T}(\mathbb{R}^d)$:
\begin{eqnarray}\label{eq:scalar product}
\displaystyle{\left\langle \sum_{\w \in \mathcal{W}_d} a_{\w}\w, \v \right\rangle = a_{\v},}
\end{eqnarray}
where $\mathcal{W}_d$ denotes the set of words in the alphabet $\{\mathtt{1},\dots, \mathtt{d}\}$, together with the empty word $\e$.

Let $X$ be a piecewise continuously differentiable path in $\mathbb{R}^d$ and $\sigma(X)$ be its signature. Consider a polynomial map $p$ from $\mathbb{R}^d$ to $\mathbb{R}^m$. One can compute the image path $p(X)$ and ask for its signature, $\sigma(p(X))$. Then, the following question comes up: 
\begin{center}
\emph{How are both signatures, $\sigma(X)$ and $\sigma(p(X))$, related?}
\end{center}
We approach this question from an algebraic point of view. We consider the dual map $p^*: \mathbb{R}[x_1,\dots,x_m] \longrightarrow \mathbb{R}[x_1,\dots,x_d]$, where both sets of variables are commutative. It is natural and common to embed the polynomial ring $\mathbb{R}[x_1,\dots,x_m]$ into the tensor algebra $\left(\mathrm{T}(\mathbb{R}^m),\shuffle\right)$. For that we identify the variable $x_i$ with the letter $\li$ and we define the embedding, denoted by $\varphi_m$ (or $\varphi$), by sending the monomial $x_{i_1}\cdots x_{i_l}$ to the shuffle product $\li_1 \shuffle\cdots\shuffle \li_l$, for $1\leq i_1,\dots,i_l\leq m$, and extending by linearity. By construction, this map is a morphism of commutative algebras, and it is injective but not surjective. For instance, for $t\geq 2$, $\varphi(x_1\cdot x_2) = \mathtt{1}\shuffle \mathtt{2} = \mathtt{12} + \mathtt{21}$ and there is no other way to obtain the words $\mathtt{12}$ and $\mathtt{21}$ as $\varphi_d(h)$, for any polynomial $h\in \mathbb{R}[x_1,\dots, x_d]$. Therefore, we cannot find a polynomial in $ \mathbb{R}[x_1,\dots,x_d]$ with image $\mathtt{12}$. 

Our first step is to define a map $M_p:\left(\mathrm{T}(\mathbb{R}^m),\shuffle\right) \longrightarrow \left(\mathrm{T}(\mathbb{R}^d),\shuffle\right)$, which is an algebra homomorphism and that is unique in the following sense.
\begin{theorem}\label{thm: mainquestion}
There exists an algebra homomorphism $M_p: \left(\mathrm{T}(\mathbb{R}^m),\shuffle\right) \longrightarrow \left(\mathrm{T}(\mathbb{R}^d),\shuffle\right)$ such that its restriction $\left. M_p\right|_{Im(\varphi_m)}$ is the unique algebra homomorphism that makes the following diagram commute:
\begin{eqnarray}\label{diagram}
\xymatrix @R=12mm @C=8mm {
 \mathbb{R}[x_1,\dots, x_m]\ar[rr]^{p^*}\ar@{^{(}->}[d]_{\varphi_m} & & \mathbb{R}[x_1,\dots, x_d]\ar@{_{(}->}[d]_{\varphi_d} \\
Im(\varphi_m)\ar@{.>}[rr]^{\left. M_p\right|_{Im(\varphi_m)}}  & & Im(\varphi_d) \\
\save[]+<0mm,+12mm>*{\cap}\restore & & \save[]+<0mm,+12mm>*{\cap}\restore  \\
\save[]+<0mm,+21mm>*{\left(\mathrm{T}(\mathbb{R}^m),\shuffle\right)}   \restore & & \save[]+<0mm,+21mm>*{\left(\mathrm{T}({\mathbb{R}}^d),\shuffle\right)}\restore \\
}
\end{eqnarray}
\end{theorem}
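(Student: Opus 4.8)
The plan is to split the statement into its two halves --- uniqueness of the restriction and existence of a global extension --- and to observe that the first half is essentially forced, so that the real work lies in the construction of $M_p$.

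First I would settle \emph{uniqueness}. By hypothesis $\varphi_m$ is injective, hence an isomorphism of commutative algebras onto $Im(\varphi_m)$; write $\varphi_m^{-1}\colon Im(\varphi_m)\to\mathbb{R}[x_1,\dots,x_m]$ for its inverse, which is again an algebra homomorphism. If $\psi\colon Im(\varphi_m)\to Im(\varphi_d)$ is any algebra homomorphism with $\psi\circ\varphi_m=\varphi_d\circ p^{*}$, then precomposing $\varphi_m^{-1}$ shows $\psi=\varphi_d\circ p^{*}\circ\varphi_m^{-1}$, so $\psi$ is unique; conversely this formula does define an algebra homomorphism making the square commute, being a composite of the algebra homomorphisms $\varphi_m^{-1}$, $p^{*}$, $\varphi_d$, with image inside $Im(\varphi_d)$ by definition of $\varphi_d$. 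So the content of the theorem is to produce an algebra homomorphism $M_p$ on all of $\bigl(\mathrm{T}(\mathbb{R}^m),\shuffle\bigr)$ whose restriction to $Im(\varphi_m)$ equals this $\psi$.

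Next I would construct $M_p$. The key input is that $\bigl(\mathrm{T}(\mathbb{R}^m),\shuffle\bigr)$ is a free commutative algebra, namely the polynomial algebra on the set of Lyndon words in $\{\mathtt{1},\dots,\mathtt{m}\}$ (Radford's theorem; see \cite{Reu93}). Hence an algebra homomorphism out of it is completely determined by, and may be defined by prescribing arbitrarily, the images of the Lyndon words. The single-letter words $\mathtt{1},\dots,\mathtt{m}$ are Lyndon words, and since $\varphi_m(x_i)=\mathtt{i}$, commutativity of the square already forces $M_p(\mathtt{i})=\varphi_d\bigl(p^{*}(x_i)\bigr)$; on the Lyndon words of length $\ge 2$ there is genuine freedom, and one fixes a convenient rule --- this is where the recursive definition alluded to in the abstract enters, chosen so that $M_p$ behaves well under composition and under homogeneous maps, although for the present statement any choice is admissible. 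Extending multiplicatively yields $M_p$.

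Finally I would verify that $M_p$ restricts correctly. By linearity it suffices to check $M_p\bigl(\varphi_m(x_{i_1}\cdots x_{i_l})\bigr)=\varphi_d\bigl(p^{*}(x_{i_1}\cdots x_{i_l})\bigr)$ on monomials. By definition of $\varphi_m$ the left side is $M_p(\mathtt{i}_1\shuffle\cdots\shuffle\mathtt{i}_l)$; since $M_p$ is a shuffle homomorphism this equals $M_p(\mathtt{i}_1)\shuffle\cdots\shuffle M_p(\mathtt{i}_l)=\varphi_d(p^{*}(x_{i_1}))\shuffle\cdots\shuffle\varphi_d(p^{*}(x_{i_l}))$, and since $\varphi_d$ and $p^{*}$ are homomorphisms of commutative algebras this is $\varphi_d\bigl(p^{*}(x_{i_1})\cdots p^{*}(x_{i_l})\bigr)=\varphi_d\bigl(p^{*}(x_{i_1}\cdots x_{i_l})\bigr)$, as wanted. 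In particular $M_p$ maps $Im(\varphi_m)$ into $Im(\varphi_d)$ and its restriction is the unique $\psi$ from the first step. I expect the only genuine obstacle to lie inside the construction of the global $M_p$: if one wants the explicit recursive definition on words rather than quoting Radford's theorem, the non-trivial point is to prove that this recursion is a shuffle homomorphism, which needs an induction on word length threaded through the recursive (half-shuffle) description of the shuffle product itself; once that is established, everything else is the formal diagram chase above.
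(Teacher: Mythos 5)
Your proof is correct, but it reaches existence by a genuinely different route than the paper. The paper takes the specific recursively defined $M_p$ of Definition~\ref{def:Mp} and proves by hand, via an induction on word lengths threaded through the recursive definition of the shuffle product, that it is a shuffle homomorphism (Proposition~\ref{prop:propertiesMp}~\eqref{itema}) and that $M_p(\li)=\varphi_d(p_i)$, hence $M_p\circ\varphi_m=\varphi_d\circ p^*$ (Proposition~\ref{prop:propertiesMp}~\eqref{itemb}); the theorem then follows exactly as in your first and last paragraphs, since $\varphi_m,\varphi_d$ are isomorphisms onto their images. You instead invoke Radford's theorem that $(\mathrm{T}(\mathbb{R}^m),\shuffle)$ is a polynomial algebra on the Lyndon words, which lets you manufacture \emph{some} extension by prescribing the single-letter images $M_p(\li)=\varphi_d(p_i)$ (forced) and anything at all on longer Lyndon words. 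That is a legitimate and shorter proof of the theorem as stated, and it makes transparent why the restriction is unique while the global extension is not --- a point the paper only remarks on in passing. What it buys less of: the theorem in the paper is really about the \emph{particular} $M_p$ of Definition~\ref{def:Mp}, whose explicit recursion is what powers everything downstream (Theorem~\ref{thm: main}, the composition law $M_{q\circ p}=M_pM_q$, the degree bounds, and the half-shuffle description); your freeness argument does not single that map out, and you correctly flag that if one wants the concrete recursion one still has to prove it is multiplicative, which is precisely the inductive computation the paper carries out. So your argument is complete for the existential statement, with the one caveat that it proves existence of an extension rather than verifying the property for the paper's named $M_p$.
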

\vspace{-2cm}The map $M_p$ has some further interesting properties and is the key to relate the signature of a path with the signature of its transformation under a polynomial map.
\begin{theorem}\label{thm: main}
Let $X:[0,L] \longrightarrow \mathbb{R}^d$ be a piecewise continuously differentiable path with $X(0)=0$ and let $p:\mathbb{R}^d \longrightarrow \mathbb{R}^m$ be a polynomial map with $p(0)=0$. Then, for all $\w \in \mathrm{T}(\mathbb{R}^m)$, 
\begin{eqnarray*}
\left\langle \sigma (p(X) ), \w \right\rangle = \left\langle \sigma(X), M_p(\w)\right\rangle.
\end{eqnarray*}
Equivalently, $\sigma(p(X)) = M_p^*\left(\sigma(X)\right)$. 
\end{theorem}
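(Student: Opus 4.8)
The plan is to reduce the statement, via multilinearity of both sides in the path coordinates, to the case of a single monomial map, and then to prove that case by a differentiation/induction argument on word length. First I would observe that since $M_p$ is linear in (the coefficients of) $p$ — this should follow from the recursive construction of $M_p$ alluded to in Theorem \ref{thm: mainquestion}, and is presumably one of its ``further interesting properties'' — and since $\sigma(p(X))$ is built from iterated integrals of the coordinates $p^j(X)$, it suffices to treat the case where $p = (p^1,\dots,p^m)$ has each $p^j$ a monomial (or even to prove the statement pairing against a single letter $\mathtt{j}$ and bootstrap). So the first reduction step is: both $\w \mapsto \langle \sigma(p(X)),\w\rangle$ and $\w\mapsto \langle \sigma(X),M_p(\w)\rangle$ are linear in $\w$, so it is enough to check the identity on words $\w = \mathtt{j}_1\cdots\mathtt{j}_n$; and both sides depend ``polynomially and linearly'' on the data of $p$ in a way that lets us assume $p$ is monomial.

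Next I would set up the induction on $n = |\w|$. The base case $n=0$ (empty word) is $1 = 1$. For the inductive step, recall that the signature satisfies the fundamental recursion coming from the definition of iterated integrals: writing $Y = p(X)$, one has
\[
\langle \sigma(Y)_{[0,L]}, \mathtt{j}_1\cdots\mathtt{j}_n\rangle = \int_0^L \langle \sigma(Y)_{[0,t]}, \mathtt{j}_1\cdots\mathtt{j}_{n-1}\rangle \, dY^{j_n}_t .
\]
Now $dY^{j_n}_t = d\bigl(p^{j_n}(X_t)\bigr)$, and by the chain rule this is $\sum_{i=1}^d \partial_i p^{j_n}(X_t)\, dX^i_t$, where $\partial_i p^{j_n}$ is again a polynomial, hence $\varphi_d(\partial_i p^{j_n})$ is a genuine shuffle-polynomial whose pairing against $\sigma(X)_{[0,t]}$ equals $\partial_i p^{j_n}(X_t)$ (this is the standard fact that $\langle\sigma(X)_{[0,t]},\varphi_d(h)\rangle = h(X_t)$ for $X(0)=0$, which I would record as a lemma — it follows because $\varphi_d$ turns products of coordinates into shuffles and $\langle\sigma(X)_{[0,t]},a\shuffle b\rangle = \langle\sigma(X)_{[0,t]},a\rangle\langle\sigma(X)_{[0,t]},b\rangle$). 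Combining with the inductive hypothesis $\langle \sigma(Y)_{[0,t]},\mathtt{j}_1\cdots\mathtt{j}_{n-1}\rangle = \langle \sigma(X)_{[0,t]}, M_p(\mathtt{j}_1\cdots\mathtt{j}_{n-1})\rangle$, the integrand becomes $\langle \sigma(X)_{[0,t]},\, M_p(\mathtt{j}_1\cdots\mathtt{j}_{n-1}) \shuffle \varphi_d(\partial_i p^{j_n})\rangle$ summed over $i$, so
\[
\langle \sigma(Y)_{[0,L]}, \mathtt{j}_1\cdots\mathtt{j}_n\rangle = \sum_{i=1}^d \int_0^L \langle \sigma(X)_{[0,t]},\, M_p(\mathtt{j}_1\cdots\mathtt{j}_{n-1}) \shuffle \varphi_d(\partial_i p^{j_n})\rangle \, dX^i_t .
\]
Using the same integral recursion now on the $X$-side, the right-hand side equals $\langle \sigma(X)_{[0,L]},\, \sum_i \bigl(M_p(\mathtt{j}_1\cdots\mathtt{j}_{n-1})\shuffle \varphi_d(\partial_i p^{j_n})\bigr)\itensor \mathtt{i}\rangle$. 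So the theorem reduces to the \emph{algebraic} identity
\[
M_p(\mathtt{j}_1\cdots\mathtt{j}_n) = \sum_{i=1}^d \bigl(M_p(\mathtt{j}_1\cdots\mathtt{j}_{n-1}) \shuffle \varphi_d(\partial_i p^{j_n})\bigr)\itensor \mathtt{i},
\]
which I expect is exactly (or is easily derived from) the recursive definition of $M_p$ from Theorem \ref{thm: mainquestion}; verifying that the $M_p$ defined there satisfies this recursion is the crux.

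The main obstacle, then, is bridging the ``defining'' characterization of $M_p$ in Theorem \ref{thm: mainquestion} (a commutative-algebra homomorphism fixed by its values on $Im(\varphi_m)$, i.e.\ determined by $M_p\circ\varphi_m = \varphi_d\circ p^*$) with the ``operational'' recursion above that the proof needs; a priori $M_p$ is only pinned down on the image of $\varphi$, and one must check the recursion is consistent with (indeed forced by) being a shuffle homomorphism extending that data. Concretely I would either (a) take the recursion displayed above as the working definition, check directly that it yields a shuffle-homomorphism restricting correctly to $Im(\varphi_m)$ — the half-shuffle/Zinbiel structure flagged in the abstract is the natural bookkeeping for why the right-hand side is a homomorphism — or (b) if the paper's construction of $M_p$ is already recursive in this form, simply cite it. A secondary technical point is the chain-rule/Leibniz manipulation of $dY^{j_n}$ when $p^{j_n}$ is a genuine monomial of degree $>1$: here $\partial_i p^{j_n}$ is a monomial of lower degree, so the induction must be structured carefully (e.g.\ an outer induction on $|\w|$ with the identity stated for all monomial maps simultaneously, or an induction on total degree of $p$), and one must handle piecewise-smoothness by working on each smooth piece and concatenating, using multiplicativity of the signature under concatenation of paths.
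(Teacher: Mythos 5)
Your core argument is essentially the paper's own proof: the paper defines $M_p$ by exactly the recursion you derive, $M_p(\w\li)=\sum_{j=1}^d\bigl(M_p(\w)\shuffle k_p^{ij}\bigr)\itensor\lj$ with $k_p^{ij}=\varphi_d(\partial_j p_i)$, and then proves the theorem by the same induction on the length of $\w$ using the chain rule, the evaluation identity $\langle\sigma(X|_{[0,t]}),\varphi_d(h)\rangle=h(X_t)$ for the Jacobian entries, and the shuffle identity, so your option (b) applies verbatim. Your preliminary reduction to monomial $p$ is unnecessary (the induction never uses monomiality, nor is an auxiliary induction on $\deg p$ needed) and its justification via ``linearity in the coefficients of $p$'' would not hold for words with repeated letters, where $\langle\sigma(p(X)),\w\rangle$ depends on the corresponding coordinate of $p$ with higher degree; dropping that step leaves precisely the paper's argument.
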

The paper is organized as follows. In Section \ref{sec:2}, we introduce briefly the framework of signatures of paths, as well as the basic notions that we need for our key combinatorial objects, the words. In Section \ref{sec:3}, we define the map $M_p$ and prove its properties in Proposition \ref{prop:propertiesMp}. Moreover, we present our main theorems, Theorems \ref{thm: mainquestion} and \ref{thm: main}, together with a generalization of the last one, Corollary \ref{cor: different origin}. In Section \ref{Sec:Halfshuffle} we look at $M_p$ as a half-shuffle homomorphism and give a generalization of Theorem \ref{thm: main} in terms of Zinbiel algebras. In Section \ref{Sec:4}, we also present two examples and a few consequences, Corollaries \ref{cor:polynomialpath}--\ref{cor:dualmap}. Finally, Section \ref{sec:5} is dedicated to applications and future work.

\section{Signatures of paths and words}\label{sec:2}

Given a piecewise continuously differentiable path $X:[0,L]\longrightarrow \mathbb{R}^d$, for any $i_1,\dots, i_n\in \{1,2,\dots, d\}$ the following integral is classically well-defined
\begin{eqnarray*}
\int_0^L dX^{i_1}\dots dX^{i_n} := 
\int_0^L \int_0^{r_n}\dots\int_0^{r_2} dX_{r_1}^{i_1}\dots dX_{r_n}^{i_n}=
 \int_0^L\int_0^{r_n}\dots\int_0^{r_2} \dot{X}_{r_1}^{i_1}\dots \dot{X}_{r_n}^{i_n} dr_1\dots dr_n.
\end{eqnarray*}
We would like to store the collection of all these integrals. 
\begin{definition}
The \emph{signature of} $X$ is defined as the following formal power series
\begin{eqnarray*}
\sigma(X) = \sum_{n\geq 0} \sum_{i_1\dots i_n} \underbrace{\int_0^L \int_0^{r_n}\dots\int_0^{r_2} dX_{r_1}^{i_1}\dots dX_{r_n}^{i_n} }_{\in \mathbb{R}}\cdot \li_1 \cdots \li_n \in \mathrm{T}((\mathbb{R}^d)).
\end{eqnarray*}
\end{definition}
As we mention in the introduction, $\mathrm{T}((\mathbb{R}^d))$ is the space of formal power series in words in the alphabet $\{\mathtt{1},\dots,\mathtt{d}\}$, and we denote by $\e$ the empty word.  It is an algebra with the concatenation product, denoted by $\w\itensor \v$ (or simply $\w\v$), which is well-defined since it respects the grading given by the number of letters appearing in each word. 
We also consider its algebraic dual $\mathrm{T}(\mathbb{R}^d)$, which is the set of polynomials in words in the same alphabet. The algebra $\mathrm{T}(\mathbb{R}^d)$ has the concatenation product, which is the same as for $\mathrm{T}((\mathbb{R}^d))$ if we multiply two finite power series.  However, we consider $\mathrm{T}(\mathbb{R}^d)$ as an algebra with the shuffle product, which we define recursively as follows. 
\begin{definition}\label{recdef-shuffle}
Let $\w$, $\w_1$ and $\w_2$ be three words and $\la$ and $\lb$ two letters. We define the \emph{shuffle product} of two words recursively by
\begin{eqnarray*}
\begin{array}{l}
\e \shuffle \w = \w \shuffle \e = \w, \text{ and }\\
(\w_1\itensor \la) \shuffle (\w_2\itensor \lb) = \left(\w_1\shuffle (\w_2\itensor \lb)\right)\itensor \la + \left((\w_1\itensor \la )\shuffle \w_2\right)\itensor\lb.
\end{array}
\end{eqnarray*}
\end{definition}
Note that in the shuffle product, we distinguish duplicated letters. For instance, for a letter $\la$, we have $\la \shuffle \la = 2 \cdot \la\la$. Notice that the concatenation is a non-commutative operation, whereas the shuffle product is commutative. 

We also need a few notions on words.  The \emph{length of a word} $\w$ is denoted by $\ell(\w)$ and counts the number of letters in $\w$. We extend this definition by linearity, defining $\ell(\w_1+\w_2) := \max\{\ell(\w_1),\ell(\w_2)\}$, for any words $\w_1$ and $\w_2$. Therefore, $\ell(\w_1\itensor \w_2) = \ell(\w_1) + \ell(\w_2) = \ell(\w_1 \shuffle \w_2)$. As an $\mathbb{R}$-vector space, $\mathrm{T}(\mathbb{R}^d)$ is graded by the length of the words:
\begin{eqnarray*}
\mathrm{T}(\mathbb{R}^d) = \bigoplus_{n\geq 0} \mathrm{T}^n(\mathbb{R}^d),
\end{eqnarray*}
where $\mathrm{T}^n(\mathbb{R}^d)$ is the vector space spanned by the words of length $n$. We also denote by $\mathrm{T}^{\leq n}(\mathbb{R}^d)$ the partial direct sum $\displaystyle{\bigoplus_{k\leq n} \mathrm{T}^k(\mathbb{R}^d)}$. This notation extends to $\mathrm{T}((\mathbb{R}^d))$. The same way, $\sigma^{(n)}(X)$ denotes the partial sum of $\sigma(X)$ for which all the appearing words have length exactly $n$. We are ready to prove the following result.

\begin{proposition}\label{prop:shuffle_associativity}
 The shuffle product is associative.
\end{proposition}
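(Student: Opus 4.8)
The plan is to prove associativity by induction on the total length of the words, after first reducing from arbitrary elements of $\mathrm{T}(\mathbb{R}^d)$ to words. Since the shuffle product is bilinear (it is defined on words and extended linearly in each argument), it suffices to show that
\[
(\u \shuffle \v) \shuffle \w = \u \shuffle (\v \shuffle \w)
\]
for all words $\u,\v,\w$. I would set $n := \ell(\u) + \ell(\v) + \ell(\w)$ and induct on $n$. If one of the three words is the empty word $\e$, then by the base clause of Definition~\ref{recdef-shuffle} (applied twice) both sides collapse to the shuffle product of the two remaining words, hence agree; in particular this settles $n=0$, and it lets us assume for the inductive step that $\u = \u_0\itensor\la$, $\v = \v_0\itensor\lb$ and $\w = \w_0\itensor\mathtt{c}$ for letters $\la,\lb,\mathtt{c}$ and strictly shorter words $\u_0,\v_0,\w_0$, with the induction hypothesis being that associativity holds for every triple of words whose lengths sum to less than $n$.

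For the inductive step the idea is to expand both sides into one and the same symmetric three-term expression. Starting from the recursive clause $\u \shuffle \v = (\u_0 \shuffle \v)\itensor\la + (\u \shuffle \v_0)\itensor\lb$, shuffling with $\w = \w_0\itensor\mathtt{c}$, and then applying bilinearity of $\shuffle$ together with the recursive clause again letter by letter, one sees that every resulting term ends in $\la$, $\lb$ or $\mathtt{c}$; collecting the $\mathtt{c}$-terms and recombining them by means of $(\u_0 \shuffle \v)\itensor\la + (\u \shuffle \v_0)\itensor\lb = \u \shuffle \v$ yields
\[
(\u \shuffle \v) \shuffle \w = \bigl[(\u_0 \shuffle \v) \shuffle \w\bigr]\itensor\la + \bigl[(\u \shuffle \v_0) \shuffle \w\bigr]\itensor\lb + \bigl[(\u \shuffle \v) \shuffle \w_0\bigr]\itensor\mathtt{c}.
\]
The symmetric computation, starting instead from $\v \shuffle \w = (\v_0 \shuffle \w)\itensor\lb + (\v \shuffle \w_0)\itensor\mathtt{c}$, gives
\[
\u \shuffle (\v \shuffle \w) = \bigl[\u_0 \shuffle (\v \shuffle \w)\bigr]\itensor\la + \bigl[\u \shuffle (\v_0 \shuffle \w)\bigr]\itensor\lb + \bigl[\u \shuffle (\v \shuffle \w_0)\bigr]\itensor\mathtt{c}.
\]
Comparing the two displays according to their last letter, the three matching pairs of bracketed expressions are exactly the instances of associativity for the triples $(\u_0,\v,\w)$, $(\u,\v_0,\w)$ and $(\u,\v,\w_0)$, each of total length $n-1$, so they coincide by the induction hypothesis; therefore the two displays are equal.

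I expect the only real work to be the bookkeeping in these two expansions: the intermediate objects $\u_0 \shuffle \v$, $\v \shuffle \w$ and the like are linear combinations of words rather than single words, so the recursive clause must be applied word by word via bilinearity, and one has to check that all the ``mixed'' contributions — those that acquire the last letter of the word which did not contribute a letter at the previous stage — recombine precisely into the displayed three-term forms. Once those expansions are carried out carefully, the argument is purely mechanical and requires no further idea.
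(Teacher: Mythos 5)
Your proposal is correct and follows essentially the same route as the paper's proof: induction on the total length $\ell(\u)+\ell(\v)+\ell(\w)$, disposing of the empty-word cases, expanding both associations into three terms grouped by the last letter of each factor (using bilinearity and the recombination of the two non-$\mathtt{c}$ terms back into $\u\shuffle\v$), and invoking the induction hypothesis on the three shorter triples. No gap; the bookkeeping you flag is exactly what the paper carries out.
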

\begin{proof}
The associativity is clear for the empty word since $(\e\shuffle\e)\shuffle\e=\e=\e\shuffle(\e\shuffle\e)$. Now, we proceed by induction. Assume that for any words $\w_1$, $\v_1$, and $\u_1$ such that $\ell(\w_1)+\ell(\v_1)+\ell(\u_1)=n$, for some $n\in\mathbb{N}_0$, we have that $(\w_1\shuffle\v_1)\shuffle\u_1=\w_1\shuffle(\v_1\shuffle\u_1)$. This is our inductive hypothesis. 
 
Let $\w_2,\v_2,\u_2$ be arbitrary words with the property that $\ell(\w_2)+\ell(\v_2)+\ell(\u_2)=n+1$. At least one of those words must thus be non-empty. If exactly two of the words are empty, both $(\w_2\shuffle\v_2)\shuffle\u_2$ and $\w_2\shuffle(\v_2\shuffle\u_2)$ are obviously equal to the non-empty word. If exactly one of the words is empty, both $(\w_2\shuffle\v_2)\shuffle\u_2$ and $\w_2\shuffle(\v_2\shuffle\u_2)$ are obviously equal to the shuffle product of the two non-empty words. In the remaining case, if $\w_2,\v_2,\u_2$ are all non-empty, there are words $\w,\v,\u$ and letters $\li,\lj,\lk$ such that $\w_2=\w\li$, $\v_2=\v\lj$ and $\u_2=\u\lk$. Then,
 \begin{multline*}
  (\w_2\shuffle\v_2)\shuffle\u_2=(\w\li\shuffle\v\lj)\shuffle\u\lk=\big((\w\shuffle\v\lj)\itensor\li+(\w\li\shuffle\v)\itensor\lj\big)\shuffle\u\lk\\
  =\big((\w\shuffle\v\lj)\shuffle\u\lk\big)\itensor\li+\big((\w\li\shuffle\v)\shuffle\u\lk\big)\itensor\lj+\Big(\big((\w\shuffle\v\lj)\li+(\w\li\shuffle\v)\lj\big)\shuffle\u\Big)\itensor\lk\\
  =\big((\w\shuffle\v\lj)\shuffle\u\lk\big)\itensor\li+\big((\w\li\shuffle\v)\shuffle\u\lk\big)\itensor\lj+\big((\w\li\shuffle\v\lj)\shuffle\u\big)\itensor\lk
 \end{multline*}
 Analogously,
 \begin{eqnarray*}
  \w_2\shuffle(\v_2\shuffle\u_2)=\big(\w\shuffle(\v\lj\shuffle\u\lk)\big)\itensor\li+\big(\w\li\shuffle(\v\shuffle\u\lk)\big)\itensor\lj+\big(\w\li\shuffle(\v\lj\shuffle\u)\big)\itensor\lk.
 \end{eqnarray*}
 Thus, since
 \begin{eqnarray*}
  \ell(\w)+\ell(\v\lj)+\ell(\u\lk)=\ell(\w\li)+\ell(\v)+\ell(\u\lk)=\ell(\w\li)+\ell(\v\lj)+\ell(\u)=n,
 \end{eqnarray*}
 we again get $(\w_2\shuffle\v_2)\shuffle\u_2=\w_2\shuffle(\v_2\shuffle\u_2)$ due to the induction hypothesis.
\end{proof}

Going back to the signatures, the dual pairing \eqref{eq:scalar product} in $\mathrm{T}((\mathbb{R}^d))\times \mathrm{T}(\mathbb{R}^d)$ allows us to extract the coefficient of a word in the signature of a path in the following way:
$$\displaystyle{\left\langle \sigma(X),\li_1\li_2\dots\li_n \right\rangle = \int_0^L dX^{i_1}\dots dX^{i_n}}.$$

Both operations, the concatenation and the shuffle products, behave nicely with respect to the signature, as the following two known results describe. The first result, known as the \emph{shuffle identity}, relates the signature of a path with the shuffle product. 
\begin{proposition}[Shuffle identity, \cite{Ree58}]\label{prop:shuffleidentity}
Let $X:[0,L]\longrightarrow \mathbb{R}^d$ be a piecewise continuously differentiable path. Then, for every $\u,\v\in \mathrm{T}(\mathbb{R}^d)$, 
\begin{eqnarray*}
\left\langle \sigma(X), \u\right\rangle \left\langle \sigma(X),\v \right\rangle = \left\langle \sigma(X), \u\shuffle \v\right\rangle. 
\end{eqnarray*}
\end{proposition}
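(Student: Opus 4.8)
The plan is to reduce to the case of words and then induct on the combined length $\ell(\u)+\ell(\v)$, working with truncated signatures. By bilinearity of the shuffle product and of the pairing it suffices to prove the identity when $\u=\li_1\cdots\li_n$ and $\v=\lj_1\cdots\lj_m$ are words. For $t\in[0,L]$ let $\sigma_t\in\mathrm{T}((\mathbb{R}^d))$ be the truncated signature obtained by replacing the upper limit $L$ by $t$ in every iterated integral, so that $\sigma_L=\sigma(X)$, $\langle\sigma_t,\e\rangle=1$ for all $t$, and $\langle\sigma_0,\w\rangle=0$ for every nonempty word $\w$. Unfolding the definition of the iterated integrals (applying Fubini to the innermost one) gives, for every word $\w$ and every letter $\la=\li$,
\begin{equation*}
\langle\sigma_t,\w\itensor\la\rangle=\int_0^t\langle\sigma_r,\w\rangle\,\dot X^{i}_r\,dr,
\end{equation*}
so that $t\mapsto\langle\sigma_t,\w\itensor\la\rangle$ is continuous, piecewise $C^1$, and has derivative $\langle\sigma_t,\w\rangle\dot X^i_t$ at every $t$ where $X$ is differentiable.

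Now fix words $\u,\v$ and set $f(t)=\langle\sigma_t,\u\rangle\langle\sigma_t,\v\rangle$ and $g(t)=\langle\sigma_t,\u\shuffle\v\rangle$; I will show $f\equiv g$ on $[0,L]$, which at $t=L$ is the assertion. If $\u=\e$ or $\v=\e$ this is immediate from $\e\shuffle\v=\v$ and $\langle\sigma_t,\e\rangle=1$, which in particular settles the base case $\ell(\u)+\ell(\v)\le 1$. Assume both words are nonempty and that the identity holds, for all $t$, for every pair of words of smaller combined length. Then $f(0)=g(0)=0$, since $\u$, $\v$, and every word occurring in $\u\shuffle\v$ are nonempty. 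Write $\u=\u_1\itensor\la$, $\v=\v_1\itensor\lb$ with $\la=\li$, $\lb=\lj$. The product rule and the derivative identity give
\begin{equation*}
f'(t)=\langle\sigma_t,\u_1\rangle\langle\sigma_t,\v\rangle\,\dot X^i_t+\langle\sigma_t,\u\rangle\langle\sigma_t,\v_1\rangle\,\dot X^j_t
\end{equation*}
wherever $X$ is differentiable, while the recursive definition of $\shuffle$ yields $\u\shuffle\v=(\u_1\shuffle\v)\itensor\la+(\u\shuffle\v_1)\itensor\lb$, hence
\begin{equation*}
g'(t)=\langle\sigma_t,\u_1\shuffle\v\rangle\,\dot X^i_t+\langle\sigma_t,\u\shuffle\v_1\rangle\,\dot X^j_t.
\end{equation*}
Since $\ell(\u_1)+\ell(\v)$ and $\ell(\u)+\ell(\v_1)$ are both strictly smaller than $\ell(\u)+\ell(\v)$, the induction hypothesis identifies $\langle\sigma_t,\u_1\shuffle\v\rangle$ with $\langle\sigma_t,\u_1\rangle\langle\sigma_t,\v\rangle$ and $\langle\sigma_t,\u\shuffle\v_1\rangle$ with $\langle\sigma_t,\u\rangle\langle\sigma_t,\v_1\rangle$, so $f'=g'$ off a finite set. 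As $f$ and $g$ are continuous and piecewise $C^1$ with $f(0)=g(0)$, it follows that $f\equiv g$ on $[0,L]$.

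I expect the only delicate point to be analytic bookkeeping rather than any conceptual difficulty: one must derive the formula for $\langle\sigma_t,\w\itensor\la\rangle$ directly from the definition, and then argue that "$f$ and $g$ continuous on $[0,L]$, equal at $0$, with $f'=g'$ off the finitely many points where $X$ fails to be differentiable" forces $f\equiv g$ — a routine consequence of piecewise continuous differentiability, handled by running the argument on each smooth piece and gluing at the break points. Everything else is purely formal manipulation of words driven by the recursive definition of the shuffle product, in the same spirit as the proof of Proposition \ref{prop:shuffle_associativity}. An alternative, coordinate-geometric proof avoids the induction altogether: $\langle\sigma(X),\u\rangle\langle\sigma(X),\v\rangle$ is the integral of $\dot X^{i_1}\cdots\dot X^{i_n}\dot X^{j_1}\cdots\dot X^{j_m}$ over the product of two time-ordered simplices, and, up to a set of measure zero where two time variables coincide, that product is the disjoint union of the $\binom{n+m}{n}$ time-ordered simplices indexed by the interleavings of the two blocks of variables — which is exactly the combinatorial content of $\u\shuffle\v$. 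I would nonetheless present the inductive argument as the main proof and mention the simplex picture only as a remark, since it merely relocates the same measure-zero bookkeeping into an application of Fubini's theorem.
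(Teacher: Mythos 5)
Your proof is correct; note, though, that the paper does not prove this proposition at all --- it is quoted as a known result with a citation to \cite{Ree58} --- so there is no internal proof to compare against. Your inductive argument is the standard self-contained one and all the steps check out: the identity $\langle\sigma_t,\w\itensor\li\rangle=\int_0^t\langle\sigma_r,\w\rangle\dot X^i_r\,dr$ is immediate from the definition of the iterated integral (it is simply the outermost integration variable being peeled off, so no Fubini is needed --- your phrase ``applying Fubini to the innermost one'' is a harmless misstatement), the base case with an empty word is trivial, and the inductive step correctly combines the product rule, the recursive definition of $\shuffle$ from Definition \ref{recdef-shuffle} extended by linearity, and the induction hypothesis stated for all $t$, which is exactly what is needed. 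The only analytic point to be slightly careful about is that the derivative formula holds at points where $\dot X^i$ is continuous (not merely where $X$ is differentiable), but since $X$ is piecewise continuously differentiable this only excludes finitely many points, and your gluing argument (run the fundamental theorem of calculus on each smooth piece, use continuity of $f-g$ at the break points, and $f(0)=g(0)$) closes the argument. The alternative simplex-decomposition proof you sketch is also valid and is essentially the original argument of Ree and Chen: the product of two time-ordered simplices decomposes, up to a null set, into the simplices indexed by the interleavings counted by $\u\shuffle\v$. Either route would be acceptable; your choice to present the inductive version as the main proof is reasonable since it uses only the recursive definition of the shuffle, in the same spirit as the paper's proof of Proposition \ref{prop:shuffle_associativity}.
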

Another important result, known as \emph{Chen's relation}, describes the signature when we concatenate paths. Let us see how the concatenation path is defined. 
\begin{definition}
Let $X,Y:[0,L] \longrightarrow \mathbb{R}^d$ be two piecewise continuously differentiable paths. We define the \emph{concatenation of} $X$ and $Y$ as the path $X\sqcup Y:[0,2L] \longrightarrow \mathbb{R}^d$ given by $X$ on $[0,L]$ and by $Y_{\cdot - L} - Y_0 + X_L$ on $[L,2L]$ (i.e. take $Y$, move it back to $0$ and then move it to the end of $X$). 
\end{definition}
The concatenation product interplays nicely with the concatenation of paths, as the following proposition shows. 
\begin{proposition}[Chen's identity, \cite{C57}]\label{prop:chensidentity}
Let $X,Y:[0,L] \longrightarrow \mathbb{R}^d$ be two piecewise continuously differentiable paths and consider their concatenation $X \sqcup Y:[0,2L] \longrightarrow \mathbb{R}^d$. Then,
$$ \sigma(X \sqcup Y) = \sigma(X)\itensor \sigma(Y).$$
\end{proposition}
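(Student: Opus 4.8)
The plan is to prove the identity coefficient by coefficient, i.e.\ to show that $\left\langle \sigma(X\sqcup Y),\w\right\rangle = \left\langle \sigma(X)\itensor\sigma(Y),\w\right\rangle$ for every word $\w=\li_1\cdots\li_n$, using the explicit description of both sides as iterated integrals and as concatenation coefficients, respectively. Write $Z:=X\sqcup Y:[0,2L]\longrightarrow\mathbb{R}^d$. By the definition of the concatenation of paths, $Z$ is again piecewise continuously differentiable: it is continuous at $t=L$, since $Z_L=X_L$ as computed from both sides, and on $(0,L)$ one has $\dot Z_r=\dot X_r$ while on $(L,2L)$ one has $\dot Z_r=\dot Y_{r-L}$ (a translate of $Y$), so at worst $Z$ has a corner at $L$.

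The key step is to decompose the domain of integration of
\[
\left\langle \sigma(Z),\w\right\rangle=\int_{0<r_1<\cdots<r_n<2L}\dot Z^{i_1}_{r_1}\cdots\dot Z^{i_n}_{r_n}\,dr_1\cdots dr_n
\]
according to the position of the cut at $r=L$. Up to a set of measure zero, the simplex $\{0<r_1<\cdots<r_n<2L\}$ is the disjoint union, over $k=0,1,\dots,n$, of the pieces $\{0<r_1<\cdots<r_k<L<r_{k+1}<\cdots<r_n<2L\}$. On such a piece the two blocks of variables are coupled only through the inequality $r_k<L<r_{k+1}$, which is already imposed, so the upper limit of the $r_k$-integral may be replaced by $L$ and the integral over the piece factors as a product: one factor over $\{0<r_1<\cdots<r_k<L\}$ in the variables feeding $\dot X$, namely $\left\langle\sigma(X),\li_1\cdots\li_k\right\rangle$, and one factor over $\{L<r_{k+1}<\cdots<r_n<2L\}$ in the variables feeding $\dot Z=\dot Y_{\cdot-L}$. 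After the affine change of variables $s_j=r_{k+j}-L$ in the second block, which introduces no boundary terms precisely because $Y$ was translated so that $Z$ stays continuous at the junction, that second factor becomes $\left\langle\sigma(Y),\li_{k+1}\cdots\li_n\right\rangle$. Summing over $k$ yields
\[
\left\langle \sigma(Z),\li_1\cdots\li_n\right\rangle=\sum_{k=0}^n\left\langle\sigma(X),\li_1\cdots\li_k\right\rangle\left\langle\sigma(Y),\li_{k+1}\cdots\li_n\right\rangle.
\]

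Finally I would match this to the right-hand side. Since the concatenation product respects the grading by length, $\sigma(X)\itensor\sigma(Y)$ is a well-defined formal power series in $\mathrm{T}((\mathbb{R}^d))$, and by definition of $\itensor$ the coefficient of $\w=\li_1\cdots\li_n$ in it equals the sum of $\left\langle\sigma(X),\u\right\rangle\left\langle\sigma(Y),\v\right\rangle$ over all ways of writing $\w$ as a concatenation $\w=\u\itensor\v$ with $\u,\v$ possibly empty; this is exactly the sum displayed above (the term $k$ corresponding to the split $\u=\li_1\cdots\li_k$, $\v=\li_{k+1}\cdots\li_n$). Hence the two power series agree in every coefficient, which gives $\sigma(X\sqcup Y)=\sigma(X)\itensor\sigma(Y)$.

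I expect the main technical point to be the bookkeeping in the simplex decomposition: checking that, with the nested integration limits, cutting at $r=L$ genuinely decouples the inner and outer integrals (so that the upper limit $r_{k+1}$ of the $r_k$-integral can be raised to $L$), and that the reparametrization $r\mapsto r-L$ on $[L,2L]$ replaces $\dot Z$ by $\dot Y$ cleanly. This is the only place where the specific normalization in the definition of $X\sqcup Y$ enters; the rest is just the definitions of the two products together with linearity.
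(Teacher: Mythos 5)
Your argument is correct. The paper itself gives no proof of this proposition --- it is quoted as a classical result with a citation to Chen --- so there is nothing to compare against; what you have written is the standard textbook proof (decompose the order simplex over $[0,2L]$ according to which of the $r_j$ lie below the junction time $L$, observe that the constraints on the two blocks decouple so the integral factors, reparametrize the second block by $r\mapsto r-L$, and recognize the resulting sum over splittings $\w=\u\itensor\v$ as the coefficient of $\w$ in the concatenation product), and every step checks out against the paper's definitions of $\sigma$, $\sqcup$, and $\itensor$. One small remark: the clause about the change of variables introducing ``no boundary terms because $Y$ was translated'' is slightly misleading --- since the integrand involves only $\dot Z$, and the derivative of $Y_{\cdot-L}-Y_0+X_L$ is $\dot Y_{\cdot-L}$ regardless of the additive constants, the substitution works for any translate; the specific normalization in the definition of $X\sqcup Y$ is needed only so that $Z$ is continuous at $t=L$ and hence a bona fide piecewise $C^1$ path. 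This does not affect the validity of the proof.
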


We finish this section with an example on how to compute the first terms of the signature of a path.
\begin{example}\label{ex:signature}
Consider the path $X:[0,1]\longrightarrow \mathbb{R}^2$ given by $X^1(t) =t$ and $X^2(t)=t^2$. 
We compute a few terms of its signature. 
\begin{eqnarray*}
\left\langle \sigma(X),\mathtt{1}\right\rangle &=& \int_0^1 dX^1_{r_1} = \int_0^1 1 dt = 1  \hspace{2cm}
\left\langle \sigma(X),\mathtt{2}\right\rangle =  \int_0^1 dX^2_{r_1} = \int_0^1 2t dt = 1 \\
\left\langle \sigma(X),\mathtt{11}\right\rangle &=& \int_0^1\int_0^{r_2} dX^1_{r_1} dX^1_{r_2} = \int_0^1 r_2 dX^1_{r_2}  = \int_0^1 r_2 dr_2 = \frac{1}{2} \\
\left\langle \sigma(X),\mathtt{12}\right\rangle &=& \int_0^1\int_0^{r_2} dX^1_{r_1} dX^2_{r_2} = \int_0^1 r_2 dX^2_{r_2}  = \int_0^1 2r_2^2 dr_2 = \frac{2}{3} \\
\left\langle \sigma(X),\mathtt{21}\right\rangle &=& \int_0^1\int_0^{r_2} dX^2_{r_1} dX^1_{r_2} = \int_0^1 r_2^2 dX^1_{r_2}  = \int_0^1 r_2^2 dr_2 = \frac{1}{3} \\
\left\langle \sigma(X),\mathtt{22}\right\rangle &=& \int_0^1\int_0^{r_2} dX^2_{r_1} dX^2_{r_2} = \int_0^1 r_2^2 dX^2_{r_2}  = \int_0^1 2r_2^3 dr_2 = \frac{2}{4} = \frac{1}{2}\\
\left\langle \sigma(X),\mathtt{222}\right\rangle &=& \int_0^1\int_0^{r_3}\int_0^{r_2} dX^2_{r_1} dX^2_{r_2}dX^2_{r_3}  = \int_0^1\int_0^{r_3} r_2^2 dX^2_{r_2}dX^2_{r_3} = \int_0^1\int_0^{r_3} 2r_2^3 dr_2dX^2_{r_3} = \\ &&\int_0^1 \frac{r_3^4}{2} dX^2_{r_3} = \int_0^1 r_3^5 dr_3 = \left. \frac{r_3^6}{6} \right|_0^1 = \frac{1}{6}  
\end{eqnarray*}
Therefore, the signature of $X$ is of the form
\begin{eqnarray*}
\sigma(X) = \mathtt{1} + \mathtt{2} +\frac{1}{2} \cdot (\mathtt{11}+\mathtt{22}) + \frac{1}{3}\cdot (2\cdot \mathtt{12} + \mathtt{21})+\frac{1}{6}\cdot \mathtt{222} + \dots
\end{eqnarray*}
\end{example}

\section{Signatures under the action of polynomial maps}\label{sec:3}
Let $p:\ \mathbb{R}^d \longrightarrow \mathbb{R}^m$ be a polynomial map given by the polynomials $p_i(x_1,x_2,\dots,x_d)$, for $i=1,2,\dots, m$, with the property that $p(0)=0$. The \emph{degree of the polynomial map} $p$ is the maximum of the degree of the polynomials that define it, $\deg(p)=\max_i \deg(p_i)$. Moreover, we say that a polynomial map is \emph{homogeneous} if the polynomials $p_i$ are homogeneous of the same degree. Finally, we denote by $J_p$ the Jacobian matrix of format $m\times d$ with entries $J_p^{ij} = \partial_j p_i$, for $i\in\{1,2,\dots, m\}$ and $j\in\{1,2,\dots,d\}$. 

Recall the algebra homomorphism $\varphi_d$ defined by:
\begin{eqnarray*}
\begin{array}{rrcl}
\varphi_d: & \mathbb{R}[x_1,x_2,\dots,x_d] & \longrightarrow & \left(\mathrm{T}(\mathbb{R}^d),\shuffle \right) \\
& x_i & \longmapsto & \li \\
& x_{i_1}\cdots x_{i_l} & \longmapsto & \li_1\shuffle \dots \shuffle \li_l
\end{array}
\end{eqnarray*}
As we mention above, by the properties of the shuffle product in $\mathrm{T}(\mathbb{R}^d)$, this map is injective but is not surjective. 

We now consider maps $M_p$, $\varphi$, and $p^*$ such that they complete the diagram \eqref{diagram} that arises in our main question in the following way:
\begin{eqnarray*}
\xymatrix @R=12mm @C=8mm {
 \mathbb{R}[x_1,\dots, x_m]\ar[rr]^{p^*}\ar@{^{(}->}[d]_{\varphi_m} & & \mathbb{R}[x_1,\dots, x_d]\ar@{_{(}->}[d]_{\varphi_d} \\
\left(\mathrm{T}(\mathbb{R}^m),\shuffle\right)\ar@{.>}[rr]^{\exists M_p}  & & \left(\mathrm{T}(\mathbb{R}^d),\shuffle\right) \\
}
\end{eqnarray*}
Notice that the map $M_p$ is unique when restricted to the image of $\varphi_m$,
but not the full $M_p$ on the tensor algebra $\mathrm{T}(\mathbb{R}^m)$.
\begin{definition}\label{def:Mp}
For any polynomial map $p:\ \mathbb{R}^d \to\mathbb{R}^m$ such that $p(0)=0$, let $k_p^{ij} = \varphi_d\left(J^{ij}_p\right)\in \mathrm{T}(\mathbb{R}^d)$, where $J_p^{ij}$ is the $(i,j)$-entry of the Jacobian matrix of $p$. We define the map $M_p: \mathrm{T}(\mathbb{R}^m) \longrightarrow \mathrm{T}(\mathbb{R}^d)$ recursively as follows: 
\begin{eqnarray*}
\begin{array}{l}
M_p(\e) = \e, \text{ for } \e \text{ the empty word, and }\\
M_p(\w \li) = \displaystyle{\sum_{j=1}^d \left( M_p(\w) \shuffle k^{ij}_p\right) \itensor \lj }, \text{ for any word } \w \text{ and any letter } \li\in\{\mathtt{1}\dots,\mathtt{m}\}.
\end{array}
\end{eqnarray*}
\end{definition}

The following result summarizes a few properties of the map $M_p$. We will use these properties to show that the map that $M_p$ as we construct it restricts according to what we need.
\begin{proposition}\label{prop:propertiesMp}
Consider two polynomial maps $p:\mathbb{R}^d \longrightarrow \mathbb{R}^m$ and $q: \mathbb{R}^m \longrightarrow \mathbb{R}^s$, with $p(0)=0$ and $q(0)=0$, and the algebra homomorphisms $\varphi_m$ and $\varphi_d$. Then, we have the following list of properties:
\begin{enumerate}[(I)]
\item \label{itema} $M_p: ( \mathrm{T}(\mathbb{R}^m), \shuffle) \longrightarrow  ( \mathrm{T}(\mathbb{R}^d), \shuffle)$ is an algebra homomorphism. 
\item \label{itemb} For $i=1,\dots, s$, $M_p \left(\varphi_m(q_i)\right) = \varphi_d(q_i\circ p)$, where the $q_i$'s are the polynomials defining the polynomial map $q$.
\item \label{itemc} $k_{q\circ p}^{ij} =\displaystyle{ \sum_{l=1}^m M_p(k_q^{il}) \shuffle k_p^{lj}}$.
\item\label{itemd} $M_{q\circ p} = M_p M_q$.
\item\label{iteme}  If $p$ is a polynomial map of degree $n$, then $M_p\left(\mathrm{T}^k(\mathbb{R}^m)\right) \subseteq \mathrm{T}^{\leq nk}(\mathbb{R}^d)$.
\item\label{itemf}  If $p$ is an homogeneous polynomial map of degree $n$, $M_p\left(\mathrm{T}^k(\mathbb{R}^m)\right) \subseteq \mathrm{T}^{nk}(\mathbb{R}^d)$
\end{enumerate}
\end{proposition}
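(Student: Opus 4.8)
The plan is to establish the six properties in a carefully chosen order, since several of them depend on earlier ones. I would prove \eqref{itema}, then \eqref{iteme} and \eqref{itemf} as parallel induction arguments, then \eqref{itemb}, then deduce \eqref{itemc} and \eqref{itemd} from \eqref{itemb} together with the uniqueness of homomorphisms on the image of $\varphi$.

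\emph{Property \eqref{itema}.} The claim is that $M_p(\u \shuffle \v) = M_p(\u) \shuffle M_p(\v)$ for all $\u, \v \in \mathrm{T}(\mathbb{R}^m)$. I would argue by induction on $\ell(\u) + \ell(\v)$. The base case (one of them empty) is immediate from $M_p(\e) = \e$. For the inductive step, write $\u = \u' \li$ and $\v = \v' \lj$, expand $\u \shuffle \v = (\u' \shuffle \v)\itensor\li + (\u \shuffle \v')\itensor\lj$ using Definition~\ref{recdef-shuffle}, apply the recursive definition of $M_p$ to each term, and then use the inductive hypothesis on $M_p(\u' \shuffle \v)$ and $M_p(\u \shuffle \v')$. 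On the other side, expand $M_p(\u) \shuffle M_p(\v)$ directly from the definition of $M_p$ on $\u = \u'\li$ and $\v = \v'\lj$, producing a shuffle of $(M_p(\u') \shuffle k_p^{i\cdot})\itensor \cdot$ with $(M_p(\v') \shuffle k_p^{j\cdot})\itensor\cdot$, and apply Definition~\ref{recdef-shuffle} once more. The two expressions must be matched term by term; this is where the associativity and commutativity of $\shuffle$ (Proposition~\ref{prop:shuffle_associativity}) are used to reshuffle the $k_p$ factors into place. \textbf{This bookkeeping is the main obstacle}: one has to see that the cross-terms produced by the outer shuffle on the right-hand side correspond exactly to the two summands coming from the recursion on the left, after the $k_p$'s have been commuted past each other.

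\emph{Properties \eqref{iteme} and \eqref{itemf}.} Both are inductions on $k$. Since $J_p^{ij} = \partial_j p_i$ has degree at most $n-1$ (exactly $n-1$ in the homogeneous case), we have $k_p^{ij} = \varphi_d(J_p^{ij}) \in \mathrm{T}^{\leq n-1}(\mathbb{R}^d)$ (resp. $\in \mathrm{T}^{n-1}(\mathbb{R}^d)$, where for $n=0$ this class is empty and there is nothing to prove). For the step, a word of length $k$ in $\mathrm{T}^k(\mathbb{R}^m)$ has the form $\w\li$ with $\ell(\w) = k-1$; then $M_p(\w\li)$ is a sum of terms $(M_p(\w) \shuffle k_p^{ij})\itensor\lj$, which by the inductive hypothesis and $\ell(\u\shuffle\v) = \ell(\u)+\ell(\v)$, $\ell(\u\itensor\lj) = \ell(\u)+1$ lands in $\mathrm{T}^{\leq (k-1)n + (n-1) + 1}(\mathbb{R}^d) = \mathrm{T}^{\leq nk}(\mathbb{R}^d)$; in the homogeneous case the lengths are exact, giving $\mathrm{T}^{nk}(\mathbb{R}^d)$.

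\emph{Properties \eqref{itemb}, \eqref{itemc}, \eqref{itemd}.} For \eqref{itemb}, I would induct on the degree of the monomials of $q_i$. Because both $M_p$ and $\varphi$ are algebra homomorphisms for $\shuffle$ (by \eqref{itema} and by construction of $\varphi$), and since $\varphi_m(q_i \cdot q_i') = \varphi_m(q_i)\shuffle\varphi_m(q_i')$, it suffices to check the claim on a single variable $x_l$, i.e. that $M_p(\varphi_m(x_l)) = M_p(\mathtt{l}) = \varphi_d(x_l \circ p) = \varphi_d(p_l)$, and then bootstrap: if $q_i = x_l \cdot q_i'$ then $\varphi_m(q_i) = \mathtt{l}\shuffle\varphi_m(q_i')$... but here one must instead use the recursive structure, writing a monomial as $\w\itensor\li$ only at the level of words, so the cleaner route is to prove $M_p(\varphi_m(h)\itensor\li) = \varphi_d(\text{appropriate derivative-type expression})$ by induction on $\ell$, unwinding the definition $M_p(\w\li) = \sum_j (M_p(\w)\shuffle k_p^{ij})\itensor\lj$ and recognizing $\sum_j (\varphi_d(H)\shuffle \varphi_d(\partial_j p_i))\itensor\lj$ as $\varphi_d$ applied to the product-rule expansion of $\partial(H \cdot (p_i\circ\cdot))$; chaining these recovers $\varphi_d(q_i\circ p)$. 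Property \eqref{itemc} then follows by applying \eqref{itemb} with $q$ in place of $q_i$: $k_{q\circ p}^{ij} = \varphi_d(\partial_j(q_i\circ p)) = \varphi_d(\sum_l (\partial_l q_i \circ p)\cdot\partial_j p_l)$, and rewriting $(\partial_l q_i\circ p)$ via \eqref{itemb} as $M_p(\varphi_m(\partial_l q_i)) = M_p(k_q^{il})$ and $\varphi_d(\partial_j p_l) = k_p^{lj}$, using that $\varphi_d$ and $M_p$ send products to shuffles. Finally \eqref{itemd}: both $M_{q\circ p}$ and $M_p M_q$ are algebra homomorphisms $(\mathrm{T}(\mathbb{R}^s),\shuffle)\to(\mathrm{T}(\mathbb{R}^d),\shuffle)$ defined recursively by the same $\w\li\mapsto\sum(\cdot\shuffle k^{ij})\itensor\lj$ rule, so it is enough to check they agree on the $k$'s, i.e. that $k_{q\circ p}^{ij} = \sum_l M_p(k_q^{il})\shuffle k_p^{lj}$ — which is exactly \eqref{itemc} — and then a straightforward induction on word length, using $M_p(k_q^{ij})$ in place of the generators, closes the argument.
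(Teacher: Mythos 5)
Your overall architecture matches the paper's: (\ref{itema}) by induction on $\ell(\u)+\ell(\v)$ using the two recursions, (\ref{iteme})--(\ref{itemf}) by induction on $k$ from the degree bound $\ell(k_p^{ij})\leq n-1$, (\ref{itemc}) from the chain rule plus (\ref{itemb}), and (\ref{itemd}) by induction on word length using (\ref{itema}) and (\ref{itemc}) to show that $M_pM_q$ obeys the same recursion as $M_{q\circ p}$. (Be careful in (\ref{itemd}) not to lean on ``agreement on letters suffices for shuffle homomorphisms'': the letters do not generate $(\mathrm{T}(\mathbb{R}^s),\shuffle)$ as an algebra, so it really is the word-length induction, which you do mention, that closes the argument.)

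The genuine gap is in (\ref{itemb}). Your reduction via (\ref{itema}) and multiplicativity of $\varphi$ correctly brings everything down to the single claim $M_p(\li)=\varphi_d(p_i)$, i.e.
\begin{equation*}
\sum_{j=1}^d \varphi_d\!\left(\partial_j p_i\right)\itensor\lj \;=\; \varphi_d(p_i),
\end{equation*}
but you never prove this identity: at exactly this point your text veers into an unspecified ``derivative-type expression'' and a ``product-rule expansion'' involving elements of the form $\varphi_m(h)\itensor\li$, which in general do not lie in $Im(\varphi_m)$ and do not address the claim. This identity is not automatic from the definitions; it is the crux of (\ref{itemb}), and everything downstream ((\ref{itemc}) and hence (\ref{itemd})) depends on it. The paper proves it by linearity in $p_i$, reducing to a monomial $p_i=x_1^{n_1}\cdots x_d^{n_d}$, where $k_p^{ij}=n_j\,\mathtt{1}^{\shuffle(n_1-\delta_{1j})}\shuffle\cdots\shuffle\mathtt{d}^{\shuffle(n_d-\delta_{dj})}$, and then verifying the combinatorial shuffle identity (last-letter expansion of the shuffle recursion together with $\li^{\shuffle(n+1)}=(n+1)\,\li^{\shuffle n}\itensor\li$) that reassembles $\sum_j k_p^{ij}\itensor\lj$ into $\mathtt{1}^{\shuffle n_1}\shuffle\cdots\shuffle\mathtt{d}^{\shuffle n_d}=\varphi_d(p_i)$. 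You need to supply this computation (or an equivalent one) to make (\ref{itemb}), and with it (\ref{itemc})--(\ref{itemd}), complete.
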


\begin{proof}
\begin{enumerate}[(I)]
\item We need to show that, for any words $\w_1$ and $\w_2$ in $\mathrm{T}(\mathbb{R}^m)$,
\begin{eqnarray*}
M_p(\w_1 \shuffle \w_2) = M_p(\w_1) \shuffle M_p(\w_2).
\end{eqnarray*}
We proceed by induction on $\ell(\w_1)+\ell(\w_2)$. For $\ell(\w_1)+\ell(\w_2) \leq 1$, at least one of the two words is the empty word $\e$, and so we assume that $\w_2=\e$. Therefore,
\begin{eqnarray*}
M_p(\w_1 \shuffle \w_2 ) = M_p(\w_1 \shuffle \e) = M_p(\w_1) = M_p(\w_1)\shuffle \e  = M_p(\w_1) \shuffle M_p(\w_2).
\end{eqnarray*}
Assume now that the statement is true for any pair of words with sum of lengths at most $n-1$. Let $\u$ and $\v$ be two words such that $\ell(\u)+\ell(\v) = n-1$ and $\la$ and $\lb$ two arbitrary letters.  Then, using Definitions \ref{recdef-shuffle} and \ref{def:Mp}, and the inductive hypothesis (IH),
\begin{multline*}
 \displaystyle{M_p(\u\la \shuffle \v\lb) \stackrel[\ref{recdef-shuffle}]{Def.}{=} M_p\left( (\u\shuffle \v\lb)\itensor \la + (\u\la \shuffle \v)\itensor \lb \right) \stackrel[\ref{def:Mp}]{Def.}{=}  }\\ \displaystyle{\sum_{i=1}^d \left[ M_p(\u \shuffle \v\lb) \shuffle k_p^{ai} + M_p(\u\la \shuffle \v)\shuffle k_p^{bi}\right] \itensor \li} \stackrel[\ref{def:Mp}]{Def.}{=} \\ 
\displaystyle{\sum_{i=1}^d \sum_{j=1}^d \left[ M_p(\u)\shuffle \left((M_p(\v)\shuffle k_p^{bj}) \itensor \li\right) \shuffle k_p^{ai} + \left((M_p(\u)\shuffle k_p^{aj}) \itensor \lj \right)\shuffle M_p(\v)\shuffle k_p^{bi} \right]\itensor \li }\stackrel{(IH)}{=}  \\
 \left(\sum_{i=1}^d (M_p(\u)\shuffle k_p^{ai})\itensor \li \right) \shuffle \left( \sum_{i=1}^d (M_p(\v)\shuffle k_p^{bi})\itensor \li\right) \stackrel[\ref{def:Mp}]{Def.}{=}  M_p(\u\la) \shuffle M_p(\v\lb).
\end{multline*}

\item Assume that $M_p(\li) = \varphi_d(p_i)$, for all $i$. Then, for the monomial $h(x_1,\dots, x_m) =x_1^{n_1}\cdot x_2^{n_2}\cdots x_m^{n_m}$, $\varphi_m(h) = \mathtt{1}^{\shuffle n_1} \shuffle \dots \shuffle \mathtt{m}^{\shuffle n_m}$. Therefore, by the property \eqref{itema},
\begin{multline*}
M_p(\varphi_m(h)) = M_p(\mathtt{1})^{\shuffle n_1} \shuffle \dots \shuffle M_p(\mathtt{m})^{\shuffle n_m} = \varphi_d(p_1)^{\shuffle n_1} \shuffle \dots \shuffle \varphi_d(p_m)^{\shuffle n_m}  =\\  \varphi_d(p_1^{n_1}\cdots p_m^{n_m}) = \varphi_d(h\circ p),
\end{multline*}
and the property \eqref{itemb} follows by linearity. 

We prove now the claim $M_p(\li) = \varphi_d(p_i)$, for all $i$. Since the two maps $p\mapsto M_p(\li)$ and $p\mapsto \varphi_d(p_i)$ are linear, it is enough to prove the claim for the case when $p_i$ is a monomial of the form $p_i=x_1^{n_1}\cdot x_2^{n_2}\cdots x_d^{n_d}$, with at least one of the $n_i$'s non-zero. In this case,
\begin{multline*}
\displaystyle{M_p(\li) = \sum_{j=1}^d \left(M_p(\e)\shuffle k_p^{ij}\right) \itensor \lj = \sum_{j=1}^d k_p^{ij} \itensor \lj \stackrel{*}{=}}\\ 
\displaystyle{\sum_{\substack{j=1 \\ n_j \neq 0}}^d n_j \left( \mathtt{1}^{\shuffle (n_1-\delta_{1j})} \shuffle \dots \shuffle  \mathtt{d}^{\shuffle (n_d-\delta_{dj})}  \right)\itensor \lj } 
 = \mathtt{1}^{\shuffle n_1} \shuffle \dots \shuffle \mathtt{d}^{\shuffle n_d} = \varphi_d(p_i),
\end{multline*}
where $*$ follows by applying enough iterations of the recursive definition of the shuffle product and the fact that $\li^{\shuffle n+1} = (n+1) \li^{\shuffle n} \itensor \li$. 

\item By the \emph{chain rule}, $\displaystyle{J_{q\circ p}^{ij} = \sum_{l=1}^m\left( J_q^{il}\circ p\right) \cdot J_p^{lj}}$. For one term of that sum, by \eqref{itemb}, $\varphi_d(J_q^{il}\circ p) = M_p\left(\varphi_m(J_q^{il})\right) = M_p(k_q^{il})$. Thus,
\begin{eqnarray*}
k_{q\circ p}^{ij} = \varphi_d\left(J_{q\circ p}^{ij}  \right) = \varphi_d\left( \sum_{l=1}^m\left( J_q^{il}\circ p\right) \cdot J_p^{lj}\right) = \sum_{l=1}^m \varphi_d\left( J_q^{il}\circ p\right) \shuffle \varphi_d(J_p^{lj}) =  \sum_{l=1}^m M_p(k_q^{il})\shuffle k_p^{lj}.
\end{eqnarray*}

\item 
We proceed by induction on $\ell(\w)$. 
For a letter $\li$, 
\begin{multline*}
M_p \circ M_q(\li)= M_p(M_q(\li)) = M_p\left(\sum_{j=1}^m k_q^{ij}\itensor \lj \right) = 
\sum_{j=1}^m M_p\left(k_q^{ij}\itensor \lj \right) = \\
\sum_{j=1}^m \left( \sum_{l=1}^d M_p(k_q^{ij}\shuffle k_p^{jl}\right)\itensor \mathtt{l} = 
\sum_{l=1}^d  \left( \sum_{j=1}^m M_p(k_q^{ij}\shuffle k_p^{jl}\right)\itensor \mathtt{l}
\stackrel{\eqref{itemc}}{=} \sum_{l=1}^d k_{q\circ p}^{il}\itensor \mathtt{l} = M_{q\circ p}(\li).
\end{multline*}

Now, we assume that the statement is true for all the words of length at most $n$ and we refer to it as (IH). Let $\w$ be one of these words and $\li$ any letter. Then,
\begin{multline*}
\displaystyle{ M_p \circ M_q(\w\li) = M_p\left( M_q(\w\li)\right) = 
M_p\left( \left(\sum_{j=1}^m M_q(\w)\shuffle k_q^{ij}\right) \itensor \lj \right) = }\\
\sum_{j=1}^m M_p\left((M_q(\w)\shuffle k_q^{ij})\itensor \lj \right)
= \displaystyle{\sum_{j=1}^m \sum_{l=1}^d \left[M_p\left( M_q(\w)\shuffle k_q^{ij}\right)\shuffle k_p^{jl} \right] \itensor \mathtt{l}\stackrel{\eqref{itema}}{=}  } \\
\displaystyle{ \sum_{j=1}^m \sum_{l=1}^d \left[ M_p(M_q(\w))\shuffle M_p(k_q^{ij})\shuffle k_p^{jl}\right] \itensor \mathtt{l}  \stackrel{\text{(IH)}}{=} 
\sum_{j=1}^m \sum_{l=1}^d \left[ M_{q\circ p}(\w))\shuffle M_p(k_q^{ij})\shuffle k_p^{jl}\right] \itensor \mathtt{l}= } \\ 
\sum_{l=1}^d \left[ M_{q\circ p}(\w))\shuffle \left( \sum_{j=1}^m  M_p(k_q^{ij})\shuffle k_p^{jl}\right)\right] \itensor \mathtt{l}   \stackrel{\eqref{itemc}}{=}
\sum_{l=1}^d \left[ M_{q\circ p}(\w) \shuffle k_{q\circ p}^{il}\right] \itensor \mathtt{l} = M_{q\circ p}(\w\li).\\
\end{multline*}

\item We start by noticing that since the polynomial map $p$ has degree $n$, then $\deg(p_i)\leq n$, for all $i$. Thus,  $\deg(J^{ij}_p)\leq n-1$ and $\ell\left(\varphi_d(J_p^{ij})\right) \leq n-1$, for all $i$ and $j$. 

Now, we proceed by induction on $k$. For $k=1$, $\mathrm{T}^1\left(\mathbb{R}^m\right)$ is the set of letters $\{\mathtt{1},\dots, \mathtt{m}\}$. Since for a letter $\li$ in this set 
$\displaystyle{M_p(\li) = \sum_{j=1}^d k^{ij}_p \itensor \lj}$, then $\displaystyle{\ell(M_p(\li)) =\max_j \left\{\ell(k_p^{ij}\itensor \lj) \right\}\leq n}$. Thus, $M_p(\li) \in \mathrm{T}^{\leq n}(\mathbb{R}^d)$. 

Assume that the statement is true for $k$. Any word $\w^\prime\in \mathrm{T}^{k+1}(\mathbb{R}^m)$ can be written as $\w^\prime = \w \itensor \li$, with $\w \in \mathrm{T}^k(\mathbb{R}^m)$ and $\li$ a letter. We analyze the length of $M_p(\w\itensor \li)$. Since $\displaystyle{M_p(\w\itensor \li) = \sum_{j=1}^d \left(M_p(\w)\shuffle k_p^{ij}\right) \itensor \lj}$, it is enough to upper bound the length of the terms appearing in the sum. By the inductive hypothesis, $\ell(M_p(\w))\leq nk$, and since $\ell(k_p^{ij})\leq n-1$, $\ell(M_p(\w)\shuffle k_p^{ij}) \leq nk + n -1$. Therefore, $\ell(M_p(\w\itensor\li))\leq nk+n-1+1 = n(k+1)$.

\item In this case, since $p$ is homogeneous of degree $n$, then $\deg(p_i)=n$, for all $i$. Moreover, $\deg(J_p^{ij})= n-1$, if the variable $x_j$ appears in $p_i$, or zero, otherwise. 

We proceed by induction on $k$. For $k=1$, let $\li$ be a letter in $\{\mathtt{1},\dots, \mathtt{m}\}$. Then, $\displaystyle{M_p(\li) = \sum_{j=1}^d k_p^{ij} \itensor \lj}$. This sum contains only terms $k_p^{ij}\itensor \lj$, which has length exactly $n$, otherwise $k_p^{ij}$ is zero according to our observation about the Jacobian entries above. Therefore, the statement follows. 

Now, assume the statement is true for $k$. Let $\w\in \mathrm{T}^{k}(\mathbb{R}^m)$ be a word and $\li$ a letter. In this case, $\displaystyle{M_p(\w\itensor \li) = \sum_{j=1}^d \left(M_p(\w)\shuffle k_p^{ij}\right)\itensor \lj}$. Again, the terms appearing in this sum have length $nk+n-1+1 = n(k+1)$, which concludes the proof. 
\end{enumerate}
\end{proof}

Once we have these properties, we recall Theorem \ref{thm: mainquestion} and prove it.
\begin{reptheorem}{thm: mainquestion}
There exists an algebra homomorphism $M_p: \left(\mathrm{T}(\mathbb{R}^m),\shuffle\right) \longrightarrow \left(\mathrm{T}(\mathbb{R}^d),\shuffle\right)$ such that its restriction $\left. M_p\right|_{Im(\varphi_m)}$ is the unique algebra homomorphism that makes the following diagram commute:
\begin{eqnarray*}
\xymatrix @R=12mm @C=8mm {
 \mathbb{R}[x_1,\dots, x_m]\ar[rr]^{p^*}\ar@{^{(}->}[d]_{\varphi_m} & & \mathbb{R}[x_1,\dots, x_d]\ar@{_{(}->}[d]_{\varphi_d} \\
Im(\varphi_m)\ar@{.>}[rr]^{\exists !\left. M_p\right|_{Im(\varphi_m)}}  & & Im(\varphi_d) \\
}
\end{eqnarray*}
\end{reptheorem}

\begin{proof}
By \eqref{itema} in Proposition \ref{prop:propertiesMp}, the restriction of $M_p$ to the image $Im(\varphi_m)$ is an algebra homomorphism. Moreover,  due to \eqref{itemb}, we have that $M_p\left(Im(\varphi_m)\right)\subseteq Im(\varphi_d)$. Since we restrict to their images, $\varphi_m$ and $\varphi_d$ are isomorphisms and the map $M_p$ is the unique one making the diagram commute. 
\end{proof}

Let us see now the answer to our main question, which is stated as Theorem \ref{thm: main} 
\begin{reptheorem}{thm: main}
Let $X:[0,L] \longrightarrow \mathbb{R}^d$ be a piecewise continuously differentiable path with $X_0=0$ and let $p:\mathbb{R}^d \longrightarrow \mathbb{R}^m$ be a polynomial map with $p(0)=0$. Then, for all $\w \in \mathrm{T}(\mathbb{R}^m)$, 
\begin{eqnarray*}
\left\langle \sigma (p(X) ), \w \right\rangle = \left\langle \sigma(X), M_p(\w)\right\rangle.
\end{eqnarray*}
Equivalently, $\sigma(p(X)) = M_p^*\left(\sigma(X)\right)$. 
\end{reptheorem}
\begin{proof}
Denote by $Y=p(X)$ and by $\displaystyle{\dot{Y} = \sum_{j=1}^d J_p^{ij}(X)\cdot \dot{X}^j}$. 
Notice that each component $X^j$ of the path equals the first signature component, $\sigma^{1}(X)=X$, and therefore, the entries of the Jacobian matrix can be seen as coefficients of the signature of $X$, 
\begin{eqnarray}\label{equation1}
J_p^{ij}(X_t) = \left\langle \sigma\left(\left.X\right|_{[0,t]}\right), k_p^{ij}\right\rangle.
\end{eqnarray}

We proceed by induction on the length of the word $\w$. 
For a letter $\li$, we have that
\begin{multline*}
\left\langle \sigma(Y),\li\right\rangle  = \int_0^L \left\langle \sigma\left(\left.Y\right|_{[0,t]}\right),\e \right\rangle d\dot{Y}_t^i = \int_0^L d\dot{Y}_t^i  = \sum_{j=1}^d \int_0^L J_p^{ij}(X_t)\dot{X}_t^j dt = \sum_{j=1}^d \int_0^L J_p^{ij}(X_t)dX_t^j \stackrel{\eqref{equation1}}{=} \\  \sum_{j=1}^d \int_0^L \left\langle \sigma\left(\left.X\right|_{[0,t]}\right), k^{ij}_p\right\rangle dX_t^j = \sum_{j=1}^d \left\langle \sigma(X), k_p^{ij}\itensor \lj \right\rangle = \left\langle \sigma(X), M_p(\li)\right\rangle. 
\end{multline*}

Now, assume that the statement is true for all the words of length at most $n$. Let $\w$ be any of these words and $\li$ any letter. By the definition of the signature,
\begin{multline}\label{equation2}
\left\langle \sigma (Y), \w\li  \right\rangle = \int_0^L \left\langle \sigma \left(\left.Y\right|_{[0,t]}\right), \w \right\rangle dY^i_t = \int_0^L \left\langle \sigma \left(\left.Y\right|_{[0,t]}\right), \w \right\rangle \dot{Y^i_t}dt = \\
\sum_{j=1}^d  \int_0^L \left\langle \sigma\left(\left.Y\right|_{[0,t]}\right), \w \right\rangle J_p^{ij}(X_t)\dot{X}^j_tdt = 
 \sum_{j=1}^d  \int_0^L \left\langle \sigma\left(\left.Y\right|_{[0,t]}\right), \w \right\rangle J_p^{ij}(X_t)dX^j_t \stackrel{\eqref{equation1}}{=} \\
  \sum_{j=1}^d  \int_0^L \left\langle \sigma\left(\left.Y\right|_{[0,t]}\right), \w \right\rangle \left\langle \sigma\left(\left.X\right|_{[0,t]}\right), k_p^{ij}\right\rangle dX^j_t.
\end{multline}
Now, apply the inductive hypothesis to $\left\langle \sigma\left(\left.Y\right|_{[0,t]}\right), \w \right\rangle$ in \eqref{equation2}, and then by Chen's identity, Proposition \ref{prop:chensidentity}, 
\begin{multline*}
\left\langle \sigma (Y), \w\li  \right\rangle = \sum_{j=1}^d  \int_0^L \left\langle \sigma\left(\left.X\right|_{[0,t]}\right), M_p(\w) \right\rangle \left\langle \sigma\left(\left.X\right|_{[0,t]}\right), k_p^{ij}\right\rangle dX^j_t = \\
\sum_{j=1}^d  \int_0^L \left\langle \sigma\left(\left.X\right|_{[0,t]}\right), M_p(\w)\shuffle k_p^{ij}\right\rangle dX^j_t = 
\sum_{j=1}^d  \left\langle \sigma(X_t), \left(M_p(\w)\shuffle k_p^{ij}\right) \itensor \lj \right\rangle = \left\langle \sigma(X), M_p(\w\li)\right\rangle.
\end{multline*}
\end{proof}
We finish this section with a generalization of Theorem \ref{thm: main} to polynomial maps that do not satisfy the condition $p(0)=0$ and paths that do not start at the origin.  
\begin{corollary}\label{cor: different origin}
Let $X:[0,L] \longrightarrow \mathbb{R}^d$ be a piecewise continuously differentiable path and let $p:\mathbb{R}^d \longrightarrow \mathbb{R}^m$ be a polynomial map. Consider the map $\tilde{p}$ given by $\tilde{p}(y) = p(y+X_0)-p(X_0)$. Then, for all $\w \in \mathrm{T}(\mathbb{R}^m)$, 
\begin{eqnarray*}
\left\langle \sigma (p(X) ), \w \right\rangle = \left\langle \sigma(X), M_{\tilde{p}}(\w)\right\rangle.
\end{eqnarray*}
\end{corollary}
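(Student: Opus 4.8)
The plan is to reduce the statement to Theorem \ref{thm: main} by translating both the path and the polynomial map so that the standing hypotheses $X_0=0$ and $p(0)=0$ are satisfied, and then to invoke the elementary fact that the signature of a path is unchanged when the path is translated by a constant vector.

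First I would introduce the shifted path $\tilde{X}\colon[0,L]\to\mathbb{R}^d$ defined by $\tilde{X}_t=X_t-X_0$. It is again piecewise continuously differentiable, it satisfies $\tilde{X}_0=0$, and $\dot{\tilde{X}}_t=\dot{X}_t$ for all $t$. Since every iterated integral $\int_0^L dX^{i_1}\cdots dX^{i_n}=\int_0^L\int_0^{r_n}\cdots\int_0^{r_2}\dot{X}^{i_1}_{r_1}\cdots\dot{X}^{i_n}_{r_n}\,dr_1\cdots dr_n$ depends only on the derivatives of the path, we get $\sigma(\tilde{X})=\sigma(X)$ (and, applying the same argument on each subinterval, $\sigma(\tilde{X}|_{[0,t]})=\sigma(X|_{[0,t]})$ for every $t$, although only the global identity is needed here).

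Next I would check that $\tilde{p}$ is admissible. Each component $\tilde{p}_i(y)=p_i(y+X_0)-p_i(X_0)$ is a polynomial in $y_1,\dots,y_d$, obtained from $p_i$ by an affine change of variables followed by subtraction of a constant, so $\tilde{p}\colon\mathbb{R}^d\to\mathbb{R}^m$ is a polynomial map, and clearly $\tilde{p}(0)=p(X_0)-p(X_0)=0$. Moreover, for every $t$,
$$\tilde{p}(\tilde{X}_t)=p(\tilde{X}_t+X_0)-p(X_0)=p(X_t)-p(X_0),$$
so the path $\tilde{p}(\tilde{X})$ equals the path $p(X)$ translated by the constant vector $-p(X_0)$. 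By the same translation invariance of the signature as above, $\sigma(\tilde{p}(\tilde{X}))=\sigma(p(X))$.

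Finally I would apply Theorem \ref{thm: main} to the path $\tilde{X}$, which starts at the origin, and the polynomial map $\tilde{p}$, which fixes the origin: for all $\w\in\mathrm{T}(\mathbb{R}^m)$,
$$\left\langle\sigma(p(X)),\w\right\rangle=\left\langle\sigma(\tilde{p}(\tilde{X})),\w\right\rangle=\left\langle\sigma(\tilde{X}),M_{\tilde{p}}(\w)\right\rangle=\left\langle\sigma(X),M_{\tilde{p}}(\w)\right\rangle,$$
which is exactly the claim. I do not expect any serious obstacle: the only point requiring a moment's care is the observation that the signature is invariant under adding a constant to a path, and this is immediate from the definition; the remainder is bookkeeping with the affine shift $y\mapsto y+X_0$.
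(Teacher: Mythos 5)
Your argument is correct, and it takes a slightly different route from the paper. You reduce the corollary to Theorem \ref{thm: main} as a black box: replace $X$ by the shifted path $\tilde{X}_t=X_t-X_0$, observe that the signature only sees derivatives (hence is invariant under translation by a constant), check $\tilde{p}(0)=0$ and $\tilde{p}(\tilde{X}_t)=p(X_t)-p(X_0)$, and conclude. The paper instead re-runs the inductive proof of Theorem \ref{thm: main} verbatim, noting that the only place where the hypotheses $X_0=0$, $p(0)=0$ entered was the identification of Jacobian entries with signature coefficients, and that in the general case one has
\begin{equation*}
J_p^{ij}(X_t)\,dX_t^j=J_{\tilde{p}}^{ij}(X_t-X_0)\,dX_t^j=\left\langle \sigma\left(\left.X\right|_{[0,t]}\right),k_{\tilde{p}}^{ij}\right\rangle dX_t^j,
\end{equation*}
since the level-one signature components record increments $X_t^i-X_0^i$ rather than positions. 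Both proofs rest on exactly this fact that the signature records only increments; your version packages it once as translation invariance of $\sigma$ and then avoids repeating the induction, which is more modular and arguably cleaner, while the paper's version makes explicit which single step of the original induction needs the correction and so keeps the corollary self-contained relative to that proof. The only ingredient you use that the paper does not state as a separate lemma is the translation invariance $\sigma(X+c)=\sigma(X)$, but this is immediate from the definition of the iterated integrals in terms of $\dot{X}$, so there is no gap.
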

\begin{proof}
The statement follows using the same argument as in the proof of Theorem \ref{thm: main} if we take into account that in this case, at the end of the step \eqref{equation1}, 
\begin{eqnarray*}
J^{ij}_p(X_t)\dot{X}_t^j dt = J_{\tilde{p}}^{ij}(X_t-X_0)dX_t^j = \left\langle \sigma\left(\left.X\right|_{[0,t]}\right), k_{\tilde{p}}^{ij}\right\rangle.
\end{eqnarray*}
\end{proof}

\subsection{$M_p$ as a half-shuffle homomorphism}\label{Sec:Halfshuffle}
The shuffle product can be seen as the symmetrization of the right half-shuffle, which we define in the following way. Let $\displaystyle{\mathrm{T}^{\geq 1}(\mathbb{R}^d) = \bigoplus_{n\geq 1} \mathrm{T}^n(\mathbb{R}^d)}$ denote the vector space spanned by the non-empty words built from $d$ letters.

\begin{definition}{\cite[Eq. (S2)]{S58},\cite[Def. 1]{FP12},\cite[Sect. 18]{EM53}}\label{def:half-shuffle}
The \emph{right half-shuffle} $\halfshuffle:\mathrm{T}^{\geq 1}(\mathbb{R}^d)\times\mathrm{T}^{\geq 1}(\mathbb{R}^d)\to \mathrm{T}^{\geq 1}(\mathbb{R}^d)$ is recursively given on words as
 \begin{align*}
  \w\halfshuffle\li &:= \w\li,\\
  \w\halfshuffle \v\li &:= (\w\halfshuffle\v+\v\halfshuffle\w)\itensor\li,
 \end{align*}
where $\w,\v$ are words and $\li$ is a letter.
\end{definition}
Therefore, for any non-empty words $\w,\v$
\begin{equation*}
 \w\shuffle \v =\w\halfshuffle\v+\v\halfshuffle\w.
\end{equation*}
Indeed, for non-empty words $\w,\v$ and letters $\li,\lj$, we have
\begin{align*}
 \li\halfshuffle\lj+\lj\halfshuffle\li&=\li\lj+\lj\li,\\
 \w\li\halfshuffle\lj+\lj\halfshuffle\w\li&=\w\li\lj+(\lj\halfshuffle\w+\w\halfshuffle\lj)\itensor\li,\quad\text{and}\\ 
 \w\li \halfshuffle \v\lj + \v\lj \halfshuffle \w\li&= (\w\li\halfshuffle\v+\v\halfshuffle\w\li)\itensor\lj+(\v\lj\halfshuffle\w+\w\halfshuffle\v\lj)\itensor\li,
\end{align*}
in accordance with Definition \ref{recdef-shuffle}.
Thus, the second equation in Definition \ref{def:half-shuffle} can be rewritten as
\begin{equation}\label{eq:halfshuffle_shuffle}
 \w\halfshuffle \v\li= (\w\shuffle\v)\itensor\li.
\end{equation}

It turns out that the right half-shuffle is an example of a more general type of algebras, the Zinbiel algebras. 
\begin{definition}\cite[Eq. (S0)]{S58}, \cite[Eq. (4)]{FP12}
A (right) \emph{Zinbiel algebra} is a vector space $Z$ together with a bilinear map $\varhalfshuffle:\,Z\times Z\to Z$ such that, 	for all $a,b,c\in Z$, 
	\begin{equation*}
		a\varhalfshuffle(b\varhalfshuffle c)=(a\varhalfshuffle b+b\varhalfshuffle a)\varhalfshuffle c.
	\end{equation*}
\end{definition}
We include here the proof of the next result since it is interesting.
\begin{theorem}\cite[Proposition 1.8]{L95}\label{thm:Zinbiel_algebra}
 $(\mathrm{T}^{\geq 1}(\mathbb{R}^d),\halfshuffle)$ is a Zinbiel algebra, i.e. for any non-empty words $\w$, $\v$, and $\u$,
 \begin{eqnarray}\label{eq:halfshuffle_Zinbiel}
\w\halfshuffle (\v\halfshuffle\u)= (\w\shuffle\v)\halfshuffle\u.
 \end{eqnarray}
\end{theorem}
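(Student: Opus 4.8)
The plan is to prove the Zinbiel identity $\w\halfshuffle(\v\halfshuffle\u)=(\w\shuffle\v)\halfshuffle\u$ for non-empty words $\w,\v,\u$ by induction on $\ell(\u)$, using the reformulation \eqref{eq:halfshuffle_shuffle} of the half-shuffle and the associativity of the shuffle product (Proposition \ref{prop:shuffle_associativity}). The point is that \eqref{eq:halfshuffle_shuffle} lets me peel off the last letter of $\u$, and when $\u$ has length one the identity collapses to shuffle associativity, which I already have.

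First I would treat the base case $\u=\lk$ a single letter. Using $\v\halfshuffle\lk=\v\lk$ and then \eqref{eq:halfshuffle_shuffle} once more, the left-hand side is $\w\halfshuffle(\v\lk)=(\w\shuffle\v)\itensor\lk$, while the right-hand side is $(\w\shuffle\v)\halfshuffle\lk=(\w\shuffle\v)\lk$, and these agree. For the inductive step, write $\u=\u'\lk$ with $\u'$ a non-empty word and $\lk$ a letter, and assume the identity holds for all shorter last arguments. Then $\v\halfshuffle\u=\v\halfshuffle(\u'\lk)=(\v\shuffle\u')\itensor\lk$ by \eqref{eq:halfshuffle_shuffle}, so
\begin{eqnarray*}
\w\halfshuffle(\v\halfshuffle\u)=\w\halfshuffle\bigl((\v\shuffle\u')\itensor\lk\bigr)=\bigl(\w\shuffle(\v\shuffle\u')\bigr)\itensor\lk,
\end{eqnarray*}
again by \eqref{eq:halfshuffle_shuffle} (here $\v\shuffle\u'$ is a sum of words, so one extends \eqref{eq:halfshuffle_shuffle} bilinearly). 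On the other side,
\begin{eqnarray*}
(\w\shuffle\v)\halfshuffle\u=(\w\shuffle\v)\halfshuffle(\u'\lk)=\bigl((\w\shuffle\v)\shuffle\u'\bigr)\itensor\lk,
\end{eqnarray*}
once more by \eqref{eq:halfshuffle_shuffle} extended bilinearly. The two right-hand sides coincide by associativity of $\shuffle$.

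One subtlety to address carefully is that $\v\halfshuffle\u$ and $\w\shuffle\v$ are in general linear combinations of words rather than single words, so I should note at the outset that $\halfshuffle$ is extended bilinearly and that the identity \eqref{eq:halfshuffle_shuffle}, stated for words, holds for arbitrary elements of $\mathrm{T}^{\geq 1}(\mathbb{R}^d)$ in the second slot by linearity in $\w$ and $\v$ — this is immediate since both sides of \eqref{eq:halfshuffle_shuffle} are bilinear in $(\w,\v)$. With that remark in place the induction above is essentially a two-line computation: the only real content is shuffle associativity, which is why the argument is short. I do not anticipate a genuine obstacle here; the main thing to get right is the bookkeeping that justifies applying \eqref{eq:halfshuffle_shuffle} to sums of words, after which the induction on $\ell(\u)$ goes through mechanically.
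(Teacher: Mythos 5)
Your argument is correct and is essentially the paper's own proof: both reduce the identity to \eqref{eq:halfshuffle_shuffle} (extended bilinearly) together with associativity of $\shuffle$ (Proposition \ref{prop:shuffle_associativity}), splitting on whether the last argument is a single letter or a word ending in one. Note only that your induction on $\ell(\u)$ is superfluous, since the inductive hypothesis is never actually invoked in your ``inductive step'' --- it is exactly the paper's direct computation.
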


\begin{proof}
 Let $\w$, $\v$, and $\u$ be non-empty words and $\li$ be an arbitrary letter. By the definition of the half-shuffle and Equation \eqref{eq:halfshuffle_shuffle}, we have
          \begin{equation*}
           \w\halfshuffle(\v\halfshuffle\li)=\w\halfshuffle\v\li=(\w\shuffle\v)\itensor\li=(\w\shuffle\v)\halfshuffle\li.
          \end{equation*}
          Using Equation \eqref{eq:halfshuffle_shuffle} and associativity of the shuffle product, Proposition \ref{prop:shuffle_associativity}, we obtain that
          \begin{eqnarray*}
            \w\halfshuffle (\v\halfshuffle\u\li)&=\w\halfshuffle\big((\v\shuffle\u)\itensor\li\big)=\big(\w\shuffle(\v\shuffle\u)\big)\itensor\li=\big((\w\shuffle\v)\shuffle\u\big)\itensor\li=(\w\shuffle\v)\halfshuffle\u\li.
          \end{eqnarray*}
\end{proof}
In fact, it is known that $(\mathrm{T}^{\geq 1}(\mathbb{R}^d),\halfshuffle)$ is free in this case.
\begin{theorem}\label{thm:free_Zinbiel}\cite[page 19]{S58}\cite[Proposition 1.8]{L95}
 Indeed, $(\mathrm{T}^{\geq 1}(\mathbb{R}^d),\halfshuffle)$ is the free Zinbiel algebra over $\mathbb{R}^d$.
\end{theorem}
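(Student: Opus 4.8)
The plan is to exhibit the universal property directly: given any Zinbiel algebra $(Z,\varhalfshuffle)$ and any linear map $f_0:\mathbb{R}^d\to Z$ (equivalently, a choice of images $f_0(\li)\in Z$ for the $d$ letters), I want to show there is a unique Zinbiel-algebra homomorphism $f:(\mathrm{T}^{\geq 1}(\mathbb{R}^d),\halfshuffle)\to(Z,\varhalfshuffle)$ extending $f_0$ on the letters. First I would define $f$ on words recursively by length, mimicking the recursion in Definition \ref{def:half-shuffle}: set $f(\li):=f_0(\li)$ for a letter $\li$, and for a word of length $\geq 2$ write it (uniquely) as $\v\li$ with $\v$ a non-empty word and $\li$ a letter, and set $f(\v\li):=f(\v)\varhalfshuffle f_0(\li)$; then extend $f$ linearly to all of $\mathrm{T}^{\geq 1}(\mathbb{R}^d)$. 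This makes sense because every non-empty word has a unique last letter, so the recursion is well-founded, and uniqueness of an extension is automatic from this recursion once we know any homomorphism must satisfy $f(\v\li)=f(\v\halfshuffle\li)=f(\v)\varhalfshuffle f(\li)$.

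The substantive step is to verify that this $f$ really is a homomorphism for the half-shuffle, i.e. $f(\w\halfshuffle\v)=f(\w)\varhalfshuffle f(\v)$ for all non-empty words $\w,\v$. I would prove this by induction on $\ell(\v)$, the length of the second argument. The base case $\ell(\v)=1$, say $\v=\li$, is exactly the defining recursion: $f(\w\halfshuffle\li)=f(\w\li)=f(\w)\varhalfshuffle f_0(\li)=f(\w)\varhalfshuffle f(\li)$. For the inductive step, write $\v=\v'\li$ with $\v'$ non-empty. Using \eqref{eq:halfshuffle_shuffle}, $\w\halfshuffle\v'\li=(\w\shuffle\v')\itensor\li=(\w\halfshuffle\v'+\v'\halfshuffle\w)\itensor\li$; applying the definition of $f$ on the last letter and then the inductive hypothesis on the shorter second arguments $\v'$ and $\w$ (note $\ell(\v')<\ell(\v)$, and the induction must be set up so that the hypothesis applies with either word in the first slot — so I would actually induct on $\ell(\w)+\ell(\v)$ or phrase the hypothesis for all pairs with second-argument length below $\ell(\v)$), I get $f(\w\halfshuffle\v'\li)=\big(f(\w)\varhalfshuffle f(\v')+f(\v')\varhalfshuffle f(\w)\big)\varhalfshuffle f_0(\li)$. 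By the Zinbiel axiom in $Z$ this equals $f(\w)\varhalfshuffle\big(f(\v')\varhalfshuffle f_0(\li)\big)=f(\w)\varhalfshuffle f(\v'\li)=f(\w)\varhalfshuffle f(\v)$, which closes the induction.

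Finally I would assemble the pieces: $f$ extends $f_0$ by construction, it is a Zinbiel homomorphism by the induction just sketched, and it is the only such map because any homomorphism extending $f_0$ is forced to agree with the defining recursion on every word. For completeness one should also observe that $(\mathrm{T}^{\geq 1}(\mathbb{R}^d),\halfshuffle)$ is itself a Zinbiel algebra, which is Theorem \ref{thm:Zinbiel_algebra}, so the statement of freeness is not vacuous. The main obstacle, such as it is, is purely bookkeeping in the induction: one must be careful to set up the inductive hypothesis so that it covers $\w$ appearing in the \emph{first} argument of a half-shuffle (which happens after invoking $\w\shuffle\v'=\w\halfshuffle\v'+\v'\halfshuffle\w$), and to use precisely the Zinbiel identity of $Z$ at the one place where the last letter is peeled off; there is no deeper difficulty, since the half-shuffle recursion and the Zinbiel axiom are designed to match.
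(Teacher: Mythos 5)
Your proposal is correct and is essentially the paper's own proof: the same recursive definition $f(\v\li):=f(\v)\varhalfshuffle f(\li)$, uniqueness forced by $\v\li=\v\halfshuffle\li$, and an induction proving the homomorphism property by peeling off the last letter via \eqref{eq:halfshuffle_shuffle}, invoking the symmetrization $\w\shuffle\v=\w\halfshuffle\v+\v\halfshuffle\w$ and the Zinbiel axiom in $Z$ (you merely read the chain of equalities in the opposite direction). Your preferred fix for the bookkeeping issue --- inducting on $\ell(\w)+\ell(\v)$ rather than on the length of the second argument alone --- is exactly what the paper does and is the right choice, since the swapped term $\v'\halfshuffle\w$ has a second argument of arbitrary length.
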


This means that for any Zinbiel algebra $(Z,\varhalfshuffle)$ and any linear map $\linearmap:\,\mathbb{R}^d\to Z$, there is a unique homomorphism $\hsm_\linearmap:\,(\mathrm{T}^{\geq 1}(\mathbb{R}^d),\halfshuffle)\to(Z,\varhalfshuffle)$ such that $\linearmap=\hsm_\linearmap\circ\iota$, where $\iota:\,\mathbb{R}^d\to\mathrm{T}^{\geq 1}(\mathbb{R}^d)$ is the canonical embedding. This is known as the \emph{universal property of the free Zinbiel algebra} and is described in Diagram \ref{Diagram}. We call $\hsm_\linearmap$ the \emph{unique extension of $B$ to a Zinbiel homomorphism}.
\begin{diag}
	\centering
	\begin{tikzcd}[column sep=large]
		\mathbb{R}^d \arrow[r, "\iota"] \arrow[dr, "\linearmap"] & (\mathrm{T}^{\geq 1}(\mathbb{R}^d),\halfshuffle) \arrow[d, "\hsm_\linearmap"]\\
		& (Z,\varhalfshuffle)
	\end{tikzcd}
	\caption{Universal property of the free Zinbiel algebra}\label{Diagram}
\end{diag}

\begin{proof}
	Define the linear map $\hsm_\linearmap:\,\mathrm{T}^{\geq 1}(\mathbb{R}^d)\to Z$ recursively by
	 \begin{equation*}
	  \hsm_\linearmap \li:=L \li,\qquad\hsm_\linearmap \v\li:=\hsm_\linearmap \v\varhalfshuffle\hsm_\linearmap \li,
	 \end{equation*}
where we identified $\mathbb{R}^d$ with the letters in $\mathrm{T}^{\geq 1}(\mathbb{R}^d)$. Since $\v\li=\v\halfshuffle \li$, this is the only candidate for a map with the desired properties. It remains to show that it is indeed a homomorphism of Zinbiel algebras.
	
	 By definition, it holds that $\hsm_\linearmap \li\lj=\hsm_\linearmap \li \varhalfshuffle \hsm_\linearmap \lj$. Thus, assume that $\hsm_\linearmap \mathtt{x}\mathtt{y}=\hsm_\linearmap \mathtt{x}\varhalfshuffle \hsm_\linearmap \mathtt{y}$ holds for all nonempty words $\mathtt{x}$, $\mathtt{y}$ such that $|\mathtt{x}|+|\mathtt{y}|=n$. Then, for all nonempty words $\w$ and $\v$ such that $|\w|+|\v|=n$, we have
	 \begin{multline*}
	 	\hsm_\linearmap \w\varhalfshuffle\hsm_\linearmap \v\li
	 	=\hsm_\linearmap \w\varhalfshuffle(\hsm_\linearmap \v\varhalfshuffle\hsm_\linearmap \li)=(\hsm_\linearmap \w \varhalfshuffle \hsm_\linearmap \v+\hsm_\linearmap \v\varhalfshuffle \hsm_\linearmap \w)\varhalfshuffle \hsm_\linearmap \li=\\
	 	\hsm_\linearmap(\w\halfshuffle \v+\v\halfshuffle \w)\varhalfshuffle\hsm_\linearmap \li=\hsm_\linearmap((\w\halfshuffle \v+\v\halfshuffle \w)\halfshuffle \li)
	 	=\hsm_\linearmap(\w\halfshuffle(\v\halfshuffle \li))=\hsm_\linearmap(\w\halfshuffle \v\li),
	 \end{multline*}
        and $\hsm_\linearmap \u\li:=\hsm_\linearmap\u\varhalfshuffle\hsm_\linearmap \li$, for all nonempty words $\u$ with $|\u|=n$ by definition. The claim follows by induction over $n$.
\end{proof}
The following result describes the relation between the map $M_p$ and the half-shuffle. 
\begin{theorem}
The restriction of $M_p$ to $\mathrm{T}^{\geq 1}(\mathbb{R}^d)$, $\left.M_p\right|_{{\mathrm{T}^{\geq 1}(\mathbb{R}^d)}}$, is the unique half-shuffle homomorphism such that $M_p(\li)=\varphi_d(p_i)$.
\end{theorem}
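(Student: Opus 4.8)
The plan is to treat the claim as two assertions: first, that $\left.M_p\right|_{\mathrm{T}^{\geq 1}(\mathbb{R}^m)}$ is genuinely a homomorphism for the right half-shuffle $\halfshuffle$; and second, that it is the \emph{only} such homomorphism sending each letter $\li$ to $\varphi_d(p_i)$. The second assertion comes for free: by Theorem \ref{thm:free_Zinbiel}, $(\mathrm{T}^{\geq 1}(\mathbb{R}^m),\halfshuffle)$ is the free Zinbiel algebra over $\mathbb{R}^m$, and by Theorem \ref{thm:Zinbiel_algebra}, $(\mathrm{T}^{\geq 1}(\mathbb{R}^d),\halfshuffle)$ is a Zinbiel algebra; hence the universal property of Diagram \ref{Diagram} guarantees that the linear map $\mathbb{R}^m\to\mathrm{T}^{\geq 1}(\mathbb{R}^d)$, $\li\mapsto\varphi_d(p_i)$, admits exactly one extension to a half-shuffle homomorphism. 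Since the identity $M_p(\li)=\varphi_d(p_i)$ was already established inside the proof of Proposition \ref{prop:propertiesMp}(II), once the first assertion is in hand $\left.M_p\right|_{\mathrm{T}^{\geq 1}(\mathbb{R}^m)}$ is forced to coincide with that unique extension.

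So the content lies in the first assertion. First I would record that $M_p$ does send $\mathrm{T}^{\geq 1}(\mathbb{R}^m)$ into $\mathrm{T}^{\geq 1}(\mathbb{R}^d)$: by Definition \ref{def:Mp} every value $M_p(\w\li)=\sum_j(M_p(\w)\shuffle k_p^{ij})\itensor\lj$ is a linear combination of words ending in a letter, hence non-empty. The key structural ingredient is Equation \eqref{eq:halfshuffle_shuffle}, $\w\halfshuffle\v\li=(\w\shuffle\v)\itensor\li$, which extends bilinearly to $a\halfshuffle(b\itensor\lj)=(a\shuffle b)\itensor\lj$ for arbitrary $a,b\in\mathrm{T}(\mathbb{R}^d)$ and any letter $\lj$. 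Writing an arbitrary non-empty word on the right as $\v\li$, on one side one obtains
\begin{equation*}
M_p(\w\halfshuffle\v\li)=M_p\big((\w\shuffle\v)\itensor\li\big)=\sum_{j=1}^d\big(M_p(\w\shuffle\v)\shuffle k_p^{ij}\big)\itensor\lj=\sum_{j=1}^d\big(M_p(\w)\shuffle M_p(\v)\shuffle k_p^{ij}\big)\itensor\lj,
\end{equation*}
the last step being the shuffle-homomorphism property of Proposition \ref{prop:propertiesMp}(I); and on the other side, expanding $M_p(\v\li)$ via Definition \ref{def:Mp} and using bilinearity of $\halfshuffle$, the extended form of \eqref{eq:halfshuffle_shuffle}, and associativity of $\shuffle$ (Proposition \ref{prop:shuffle_associativity}),
\begin{equation*}
M_p(\w)\halfshuffle M_p(\v\li)=\sum_{j=1}^d\big(M_p(\w)\shuffle(M_p(\v)\shuffle k_p^{ij})\big)\itensor\lj=\sum_{j=1}^d\big(M_p(\w)\shuffle M_p(\v)\shuffle k_p^{ij}\big)\itensor\lj.
\end{equation*}
The two right-hand sides agree, which is exactly $M_p(\w\halfshuffle\v\li)=M_p(\w)\halfshuffle M_p(\v\li)$. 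Notably, no fresh induction is needed here: the induction already happened in Proposition \ref{prop:propertiesMp}(I), and that result does all the work.

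The main point requiring attention is not a deep obstacle but careful bookkeeping around linearity and degenerate cases. I would verify explicitly that \eqref{eq:halfshuffle_shuffle} extends to $a\halfshuffle(b\itensor\lj)=(a\shuffle b)\itensor\lj$ for linear combinations, and handle separately the case $\v=\e$ (there $\v\li=\li$, $M_p(\v)=\e$, and $M_p(\w)\halfshuffle M_p(\li)=\sum_j(M_p(\w)\shuffle k_p^{ij})\itensor\lj=M_p(\w\li)$, so the identity still holds). One should also note that $k_p^{ij}=\varphi_d(J_p^{ij})$ may carry a nonzero empty-word component — precisely when $p_i$ is linear in $x_j$ — but this is harmless, since the shuffle products above are taken in all of $\mathrm{T}(\mathbb{R}^d)$ while the outer concatenation with a letter always returns to $\mathrm{T}^{\geq 1}(\mathbb{R}^d)$, so $\halfshuffle$ is only ever applied to genuine non-empty elements. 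A fully self-contained alternative, if one prefers not to invoke Proposition \ref{prop:propertiesMp}(I), is to rerun its induction on $\ell(\w)+\ell(\v)$ with $\shuffle$ replaced throughout by $\halfshuffle$; but the reduction above is shorter and more transparent.
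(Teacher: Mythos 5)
Your proof is correct, but it distributes the work differently from the paper. The paper's own proof is a one-line verification of the generating recursion
\begin{equation*}
M_p(\w\li)=\sum_{j=1}^d\left(M_p(\w)\shuffle k_p^{ij}\right)\itensor\lj=M_p(\w)\halfshuffle\Big(\sum_{j=1}^d k_p^{ij}\itensor\lj\Big)=M_p(\w)\halfshuffle M_p(\li),
\end{equation*}
i.e.\ it checks only that $M_p$ satisfies the same recursion that defines $\hsm_\linearmap$ in the proof of Theorem \ref{thm:free_Zinbiel} (with $\linearmap\li=\varphi_d(p_i)$), and then outsources both the homomorphism property and uniqueness to that proof; in particular it never invokes Proposition \ref{prop:propertiesMp}\eqref{itema}. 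You instead prove the full half-shuffle homomorphism identity $M_p(\w\halfshuffle\v\li)=M_p(\w)\halfshuffle M_p(\v\li)$ directly, transferring it from the shuffle-homomorphism property of Proposition \ref{prop:propertiesMp}\eqref{itema} via the bilinear extension of \eqref{eq:halfshuffle_shuffle} and associativity of $\shuffle$, and then use the universal property only for uniqueness. Both routes are sound; the paper's is shorter but leans on the internal structure of the proof of Theorem \ref{thm:free_Zinbiel} (that the recursion $\hsm_\linearmap(\v\li)=\hsm_\linearmap(\v)\varhalfshuffle\hsm_\linearmap(\li)$ pins down the unique homomorphic extension), whereas yours needs only the statement of the universal property at the price of importing Proposition \ref{prop:propertiesMp}\eqref{itema}, whose inductive proof is where the real work then sits. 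Your bookkeeping (image landing in $\mathrm{T}^{\geq 1}(\mathbb{R}^d)$, the $\v=\e$ case, possible empty-word components of $k_p^{ij}$) is careful and correct; note also that you rightly take the domain to be $\mathrm{T}^{\geq 1}(\mathbb{R}^m)$, silently correcting the typo in the theorem statement.
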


\begin{proof}
 Using \eqref{eq:halfshuffle_shuffle} and the definition of $M_p$, we get
 \begin{equation*}
  M_p(\w\li)=\sum_{j=1}^d \left( M_p(\w) \shuffle k^{ij}_p\right) \itensor \lj=M_p(\w)\halfshuffle\Big(\sum_{j=1}^d k^{ij}_p \itensor \lj\Big)=M_p(\w)\halfshuffle M_p(\li),
 \end{equation*}
 and thus the statement follows immediately from the proof of Theorem \ref{thm:free_Zinbiel}.
\end{proof}

We finish this section with a generalization of Theorem \ref{thm: main} in terms of Zinbiel algebras stated in the following result.
\begin{theorem}\label{theorem:signaturesandhsm}
 Let $X:[0,L] \longrightarrow \mathbb{R}^d$ be a piecewise continuously differentiable path and $\linearmap:\,\mathbb{R}^m\to(\mathrm{T}^{\geq 1}(\mathbb{R}^d),\halfshuffle)$ be a linear map. Then, the signature of the path 
 \begin{equation*}
 Y:[0,L]\longrightarrow\mathbb{R}^m, \quad Y_t^i:=\left\langle\sigma\left(X|_{[0,t]}\right),\linearmap\li\right\rangle,
 \end{equation*}
 is a linear transformation of the signature of $X$, namely 
 \begin{equation*}
  \left\langle \sigma (Y), \w \right\rangle = \left\langle \sigma(X), \hsm_B\w\right\rangle,
 \end{equation*}
 where $\hsm_B$ is the unique extension of $B$ to a Zinbiel homomorphsim.
\end{theorem}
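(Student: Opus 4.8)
The plan is to adapt the proof of Theorem~\ref{thm: main} almost verbatim, with the elements $B\li\in\mathrm{T}^{\geq 1}(\mathbb{R}^d)$ playing the role that $M_p(\li)=\sum_j k_p^{ij}\itensor\lj$ played there. Concretely, I would prove the stronger statement that
\begin{equation*}
\left\langle\sigma(Y|_{[0,t]}),\w\right\rangle=\left\langle\sigma(X|_{[0,t]}),\hsm_B\w\right\rangle
\end{equation*}
holds for every $t\in[0,L]$ and every non-empty word $\w$ (the empty word being trivial, with the convention $\hsm_B\e=\e$), by induction on $\ell(\w)$; the theorem is then the case $t=L$.

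The preparatory observation is a formula for $\dot Y_s^i$. Decomposing $B\li\in\mathrm{T}^{\geq 1}(\mathbb{R}^d)$ according to the last letter of each word, write $B\li=\sum_{j=1}^d u^{ij}\itensor\lj$ with $u^{ij}\in\mathrm{T}(\mathbb{R}^d)$ (where $u^{ij}$ is allowed to contain the empty word). Since $\left\langle\sigma(X|_{[0,s]}),u\itensor\lj\right\rangle=\int_0^s\left\langle\sigma(X|_{[0,r]}),u\right\rangle dX_r^j$ for any $u\in\mathrm{T}(\mathbb{R}^d)$ by definition of the signature, differentiating in $s$ yields
\begin{equation*}
\dot Y_s^i=\sum_{j=1}^d\left\langle\sigma(X|_{[0,s]}),u^{ij}\right\rangle\dot X_s^j,
\end{equation*}
which in particular shows that $Y$ is again piecewise continuously differentiable, so that $\sigma(Y)$ is well-defined. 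This is the exact analogue of Equation~\eqref{equation1}, with $u^{ij}$ in place of $k_p^{ij}$. The base case $\w=\li$ is then essentially the definition of $Y$: since $B\li\in\mathrm{T}^{\geq 1}(\mathbb{R}^d)$ pairs to zero with the unit $\e=\sigma(X|_{[0,0]})$ we get $Y_0^i=0$, hence $\left\langle\sigma(Y|_{[0,t]}),\li\right\rangle=Y_t^i-Y_0^i=Y_t^i=\left\langle\sigma(X|_{[0,t]}),B\li\right\rangle=\left\langle\sigma(X|_{[0,t]}),\hsm_B\li\right\rangle$.

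For the inductive step, let $\w$ have length $n$ and $\li$ be a letter. Expanding $\left\langle\sigma(Y|_{[0,t]}),\w\li\right\rangle=\int_0^t\left\langle\sigma(Y|_{[0,s]}),\w\right\rangle\dot Y_s^i\,ds$, I would substitute the formula for $\dot Y_s^i$, apply the induction hypothesis to $\left\langle\sigma(Y|_{[0,s]}),\w\right\rangle$, and then use the shuffle identity (Proposition~\ref{prop:shuffleidentity}) to obtain
\begin{align*}
\left\langle\sigma(Y|_{[0,t]}),\w\li\right\rangle&=\sum_{j=1}^d\int_0^t\left\langle\sigma(X|_{[0,s]}),\hsm_B\w\shuffle u^{ij}\right\rangle dX_s^j\\
&=\left\langle\sigma(X|_{[0,t]}),\sum_{j=1}^d\left(\hsm_B\w\shuffle u^{ij}\right)\itensor\lj\right\rangle,
\end{align*}
using once more the defining iterated-integral identity in the last step. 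It then remains to identify $\sum_{j=1}^d(\hsm_B\w\shuffle u^{ij})\itensor\lj$ with $\hsm_B(\w\li)$: by Equation~\eqref{eq:halfshuffle_shuffle} together with the first line of Definition~\ref{def:half-shuffle}, extended by bilinearity of $\halfshuffle$, this sum equals $\hsm_B\w\halfshuffle B\li=\hsm_B\w\halfshuffle\hsm_B\li$, which is $\hsm_B(\w\li)$ by the recursive definition of the unique extension $\hsm_B$ from the proof of Theorem~\ref{thm:free_Zinbiel} (equivalently, since $\hsm_B$ is a half-shuffle homomorphism and $\w\li=\w\halfshuffle\li$).

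Since the argument is a close transcription of the proof of Theorem~\ref{thm: main}, I do not anticipate a genuine obstacle. The one point that requires care is the bilinear extension of Equation~\eqref{eq:halfshuffle_shuffle}: one must verify that for an arbitrary $z=\sum_{j=1}^d u_j\itensor\lj\in\mathrm{T}^{\geq 1}(\mathbb{R}^d)$ and any $a\in\mathrm{T}^{\geq 1}(\mathbb{R}^d)$ one has $a\halfshuffle z=\sum_{j=1}^d(a\shuffle u_j)\itensor\lj$, so that the de-concatenation bookkeeping appearing in $\dot Y_s^i$ is exactly matched by the recursion defining $\hsm_B$. This is routine but is the step where one could slip, especially regarding the terms of $B\li$ that are single letters (for which $u^{ij}$ acquires an empty-word component and one falls back on the first line of Definition~\ref{def:half-shuffle}).
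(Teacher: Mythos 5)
Your proposal is correct and is essentially the paper's own argument: your decomposition $B\li=\sum_{j}u^{ij}\itensor\lj$ is the paper's $u^{ij}=\Tminus{\lj}B\li$, and the bilinear extension of Equation~\eqref{eq:halfshuffle_shuffle} that you flag as the delicate point is exactly its Lemma~\ref{lemma:TminusTplus}. The induction on $\ell(\w)$, the use of the shuffle identity, and the identification of $\sum_{j}\bigl(\hsm_B\w\shuffle u^{ij}\bigr)\itensor\lj$ with $\hsm_B\w\halfshuffle\hsm_B\li=\hsm_B(\w\li)$ all match the paper's proof.
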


Before proving this result, we introduce some notation. We denote by $X_{0s}^{z}$ the coefficient of $z$ in the signature of the path $X$ restricted to the interval $[0,s]$. That is, 
 \begin{equation*}
  X_{0s}^{z}:=\langle\sigma(X{\restriction_{[0,s]}}),z\rangle.
 \end{equation*}
Note that $X_{0s}^{\li}=X_s^i-X_0^i$. Then, the path $Y$ introduced in Theorem \ref{theorem:signaturesandhsm} is given by $Y_s^{\li}=X_{0s}^{B\li}$. Moreover, for any letter $\li$, we define the maps $\Tminus{\li},\Tplus{\li}:\,\mathrm{T}(\mathbb{R}^d)\to\mathrm{T}(\mathbb{R}^d)$ to be the unique linear maps given recursively by $\Tplus{\li}\w=\w\li$ and by $\Tminus{\li}\w\lj=\delta_{\li\lj}\w$ with $\Tminus{\li}\e=0$, respectively, for any word $\w$.  

These two maps allows us to define the right half-shuffle as shown in the following technical result. 
\begin{lemma}\label{lemma:TminusTplus}
 For any $x,y\in\mathrm{T}^{\geq 1}(\mathbb{R}^d)$, we have $x\halfshuffle y=\displaystyle{\sum_{\li=\mathtt{1}}^{\mathtt{d}}\Tplus{\li}(x\shuffle\Tminus{\li}y)}$.
\end{lemma}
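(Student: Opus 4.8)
The plan is to verify the identity $x \halfshuffle y = \sum_{\li=\mathtt{1}}^{\mathtt{d}} \Tplus{\li}(x \shuffle \Tminus{\li} y)$ directly on words, exploiting that the right half-shuffle is defined recursively on the last letter of its second argument. First I would reduce to the case where $y$ is a word, say $y = \v\lj$ for some word $\v$ and letter $\lj$; both sides are bilinear, so it suffices to check this case (and one can take $x$ to be a word $\w$ as well). On the left, Equation \eqref{eq:halfshuffle_shuffle} gives $\w \halfshuffle \v\lj = (\w \shuffle \v)\itensor\lj$.

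On the right, I would compute $\Tminus{\li}(\v\lj) = \delta_{\li\lj}\,\v$, so only the term $\li = \lj$ survives, and the sum collapses to $\Tplus{\lj}(\w \shuffle \v) = (\w \shuffle \v)\itensor\lj$. This matches the left-hand side, completing the argument. So the proof is essentially a one-line unwinding once the reduction to $y = \v\lj$ is in place; the only subtlety is to be careful that $x, y$ lie in $\mathrm{T}^{\geq 1}(\mathbb{R}^d)$, so that $y$ genuinely ends in a letter and \eqref{eq:halfshuffle_shuffle} applies, and that $\Tminus{\li}$ is applied to a genuine word (on the empty word it vanishes, but that case does not arise here).

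I do not anticipate a real obstacle: unlike the Zinbiel identity, no induction is needed here because the half-shuffle's recursion already peels off exactly one letter from $y$, which is precisely what $\Tminus{\li}$ and $\Tplus{\li}$ encode. If one prefers not to invoke \eqref{eq:halfshuffle_shuffle} and instead work from Definition \ref{def:half-shuffle} directly, the computation $\w \halfshuffle \v\lj = (\w\halfshuffle\v + \v\halfshuffle\w)\itensor\lj = (\w\shuffle\v)\itensor\lj$ recovers the same thing, so the lemma is robust to that choice. The main point worth spelling out in the write-up is simply the bilinearity reduction and the observation that $\sum_{\li} \Tplus{\li}(x \shuffle \Tminus{\li}y)$ "reads the last letter of $y$" — which is exactly the defining feature of $\halfshuffle$.
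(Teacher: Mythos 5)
Your proposal is correct and is essentially the paper's own argument: reduce by bilinearity to $x=\w$, $y=\v\lj$ words, note $\Tminus{\li}(\v\lj)=\delta_{\li\lj}\v$ so the sum collapses to $\Tplus{\lj}(\w\shuffle\v)=(\w\shuffle\v)\itensor\lj$, and identify this with $\w\halfshuffle\v\lj$ via \eqref{eq:halfshuffle_shuffle}. No substantive difference from the paper's proof.
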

\begin{proof}
 This is just a reformulation of \eqref{eq:halfshuffle_shuffle} in the following way. For any word $\v$, any non-empty word $\w$, and any letter $\lj$, we have that 
 \begin{equation*}
  \sum_{\li=\mathtt{1}}^{\mathtt{d}}\Tplus{\li}(\w\shuffle\Tminus{\li}\v\lj)=\Tplus{\lj}(\w\shuffle\v)=(\w\shuffle\v)\itensor\lj=\w\halfshuffle\v\lj,
 \end{equation*}
 where the last equality follows from \eqref{eq:halfshuffle_shuffle}. Then, the general statement for any $x,y\in\mathrm{T}^{\geq 1}(\mathbb{R}^d)$ follows from (bi)linearity.
\end{proof}
Now we are ready to prove Theorem \ref{theorem:signaturesandhsm}.

\begin{proof}[Proof of Theorem \ref{theorem:signaturesandhsm}]
For better readability, we put $\hsm:=\hsm_B$. First note that by the definition of the signature and the fact that $X$ is continuously differentiable almost everywhere, we have
\begin{equation*}
 Y_s^{\li}=X_{0s}^{B\li}=X_{0s}^{\hsm\li}=\sum_{i=1}^d\int_0^s X_{0t}^{\Tminus{\li}\hsm\li}\mathrm{d}X_t^i=\sum_{i=1}^d\int_0^s X_{0t}^{\Tminus{\li}\hsm\li}\dot{X}_t^i\mathrm{d}t
\end{equation*}
and thus, for almost all $s\in[0,T]$, 
\begin{equation*}
 \dot{Y}_s^{\li}=\sum_{i=1}^d X_{0t}^{\Tminus{\li}\hsm\li}\dot{X}_t^i.
\end{equation*}
Following an inductive argument, assume now that $X_{0s}^{\hsm\w}=Y_{0s}^{\w}$ holds for some word $\w$. Then,
 \begin{align*}
  Y_{0s}^{\w\li}=\int_0^s Y_{0t}^{\w}\mathrm{d}Y^i_t=\int_0^s Y_{0t}^{\w}\dot{Y}_t^i\mathrm{d}t=\sum_{l=1}^d\int_0^s X_{0t}^{\hsm\w}X_{0t}^{\Tminus{l}\hsm\li}\dot{X}_t^l\mathrm{d}t\\
  =\int_0^s X_{0t}^{\sum_{\mathtt{l}=\mathtt{1}}^{\mathtt{d}}\hsm\w\shuffle\Tminus{\mathtt{l}}\hsm\li}\mathrm{d}X_{0t}^{\mathtt{l}}=X_{0t}^{\sum_{\mathtt{l}=\mathtt{1}}^{\mathtt{d}}\Tplus{\mathtt{l}}(\hsm\w\shuffle\Tminus{\mathtt{l}}\hsm\li)}=X_{0t}^{\hsm\w\halfshuffle\hsm\li}=X_{0t}^{\hsm\w\li},
 \end{align*}
were we used Lemma \ref{lemma:TminusTplus} and the fact that $\hsm$ is a homomorphism of Zinbiel algebras.
\end{proof}

Theorem \ref{theorem:signaturesandhsm} can also be directly shown using
\begin{equation}\label{eq:halfshuffle_identity}
  \int_0^s X_{0t}^{x}\mathrm{d} X_{0t}^{y}=X_{0s}^{x\halfshuffle y}
\end{equation}
for any $x,y\in\mathrm{T}^{\geq 1}(\mathbb{R}^d)$, a relation which is quite fundamental for an algebraic understanding of the signature and was mentioned already for example in \cite{GK} right after equation~(6). Conversely, starting from Theorem \ref{theorem:signaturesandhsm}, equation \eqref{eq:halfshuffle_identity} is immediate with the choice $B\mathtt{1}=x, B\mathtt{2}=y$.

%
The following example illustrates the results presented in this section.
\begin{example}
For a given path $X:[0,L]\to\mathbb{R}^3$, let 
\begin{equation*}
 Y=\left(\operatorname{Area}(X^2,X^3),\operatorname{Area}(X^3,X^1),\operatorname{Area}(X^1,X^2)\right)
\end{equation*}
denote the area path of X, where $\operatorname{Area}(X^i,X^j)_t=\int_0^t\int_0^s\mathrm{d}X^i_u\mathrm{d}X^j_s-\int_0^t\int_0^s\mathrm{d}X^j_u\mathrm{d}X^i_s$. Let us compute $\langle\sigma(Y),\mathtt{12}-\mathtt{21}\rangle$ in the special case that $X:[0,1]\to\mathbb{R}^3,\,X(t)=(t,t^2,t^3)$. We have $Y_s^i=X_{0s}^{B\li}$ where
\begin{equation*}
 B\mathtt{1}=\mathtt{23}-\mathtt{32},\quad B\mathtt{2}=\mathtt{31}-\mathtt{13},B\mathtt{3}=\mathtt{12}-\mathtt{21}
\end{equation*}
We need to compute $\hsm_B(\mathtt{12}-\mathtt{21})=B\mathtt{1}\halfshuffle B\mathtt{2}-B\mathtt{2}\halfshuffle B\mathtt{1}$. To this end,
\begin{align*}
(\mathtt{23}-\mathtt{32})\halfshuffle(\mathtt{31}-\mathtt{13})&=2\cdot\mathtt{2331}-2\cdot\mathtt{3321}-\mathtt{2313}-\mathtt{2133}-\mathtt{1233}+\mathtt{3213}+\mathtt{3123}+\mathtt{1323},\\
(\mathtt{31}-\mathtt{13})\halfshuffle(\mathtt{23}-\mathtt{32})&=\mathtt{3123}+\mathtt{3213}+\mathtt{2313}-\mathtt{1323}-\mathtt{1233}-\mathtt{2133}-\mathtt{3132}-2\cdot\mathtt{3312}\\
&\quad+2\cdot\mathtt{1332}+\mathtt{3132}.
\end{align*}
Thus,
$\hsm_B(\mathtt{12}-\mathtt{21})=2\cdot\left(-\mathtt{1323}+\mathtt{1332}+\mathtt{2313}-\mathtt{2331}+\mathtt{3312}-\mathtt{3321}\right)$. Since in our special case of $X(t)=(t,t^2,t^3)$ it holds that \cite[Example 2.2]{AFS18}
\begin{equation*}
 \int_0^1\int_0^u\int_0^t\int_0^s\mathrm{d}X^i_r\mathrm{d}X^j_s\mathrm{d}X^k_t\mathrm{d}X^l_u=\frac{j\cdot k\cdot l}{(j+i)(k+j+i)(l+k+j+i)},
\end{equation*}
we get that
\begin{align*}
 \langle&\sigma(Y),\mathtt{12}-\mathtt{21}\rangle\\
 &=2\cdot\left(-\frac{3\cdot 2\cdot3}{4\cdot6\cdot9}+\frac{3\cdot 3\cdot 2}{4\cdot 7\cdot 9}+\frac{3\cdot 1\cdot 3}{5\cdot 6 \cdot 9}-\frac{3\cdot3\cdot 1}{5\cdot8\cdot9}+\frac{3\cdot1\cdot 2}{6\cdot7\cdot9}-\frac{3\cdot2\cdot 1}{6\cdot8\cdot9}\right)\\
 &=-\frac{1}{315}\approx-0.00317
\end{align*}
as the desired result.
\end{example}

\section{Examples and consequences}\label{Sec:4}
Let us start with an \emph{easy example} to illustrate how Theorem \ref{thm: main} works, and also the property \eqref{iteme} in Proposition \ref{prop:propertiesMp}.
\begin{example}\label{Example1}
Consider the polynomial map $p:\mathbb{R}^2 \longrightarrow \mathbb{R}^3$ given by the polynomials $p_1 = x^2$, $p_2=y^3$ and $p_3=x-y$. Consider also the path in Example \ref{ex:signature}, $X:[0,1] \longrightarrow \mathbb{R}^2$ given by $X^1(t)=t$ and $X^2(t)=t^2$. We want to compute a few terms in the signature of the path $p(X)$. 

We start computing the Jacobian matrix and its image under $\varphi$:
\begin{eqnarray*}
\left(J_p^{ij}\right)_{i,j} = \left( 
\begin{array}{cc}
2x & 0 \\
0 & 3y^2 \\
1 & -1
\end{array} \right) \hspace{1cm} \text{ and } \hspace{1cm} 
\left(k_p^{ij}\right)_{i,j} = \left(\varphi\left(J_p^{ij}\right)\right)_{i,j}= \left( 
\begin{array}{cc}
2\cdot \mathtt{1} & 0 \\
0 & 6\cdot \mathtt{22} \\
\e & - \e
\end{array} \right).
\end{eqnarray*}
Notice that $\varphi(3y^2) = 3\cdot \mathtt{2}\shuffle \mathtt{2} = 6 \cdot \mathtt{22}$.
We use Definition \ref{def:Mp} to compute the image of a few words:
\begin{eqnarray*}
\begin{array}{l}
M_p(\mathtt{1}) = 2\cdot \mathtt{11}, \hspace{1cm} M_p(\mathtt{2}) = 6\cdot \mathtt{222}, \hspace{1cm} M_p(\mathtt{3}) = \mathtt{1} - \mathtt{2},\\[2ex]
M_p(\mathtt{33}) = (\mathtt{1}-\mathtt{2})\itensor \mathtt{1} - (\mathtt{1}-\mathtt{2})\itensor \mathtt{2} = \mathtt{11} + \mathtt{22} - \mathtt{12} - \mathtt{21}.
\end{array}
\end{eqnarray*}
We observe that for any word $\w$ above, $\ell\left(M_p(\w)\right) \leq 3\cdot \ell(\w)$. This is due to property $\eqref{iteme}$ in Proposition \ref{prop:propertiesMp} since $\deg(p)=3$.
Now, applying Theorem \ref{thm: main} and looking at the signature terms computed in Example \ref{ex:signature}, we obtain that
\begin{eqnarray*}
\begin{array}{l}
\displaystyle{\left\langle \sigma\left(p(X)\right),\mathtt{1}\right\rangle =  \left\langle \sigma(X),M_p(\mathtt{1})\right\rangle =\left\langle \sigma(X), 2\cdot \mathtt{11}\right\rangle = 2\cdot \frac{1}{2} =1, }\\[2ex]
\displaystyle{\left\langle \sigma\left(p(X)\right),\mathtt{2}\right\rangle =  \left\langle \sigma(X),M_p(\mathtt{2})\right\rangle =\left\langle \sigma(X), 6\cdot \mathtt{222}\right\rangle = 6\cdot \frac{1}{6} =1, }\\[2ex]
\displaystyle{\left\langle \sigma\left(p(X)\right),\mathtt{3}\right\rangle =  \left\langle \sigma(X),M_p(\mathtt{3})\right\rangle =\left\langle \sigma(X), \mathtt{1}-\mathtt{2}\right\rangle = 1-1=0, } \text{ and }\\[2ex]
\displaystyle{\left\langle \sigma\left(p(X)\right),\mathtt{33}\right\rangle =  \left\langle \sigma(X),M_p(\mathtt{33})\right\rangle =\left\langle \sigma(X), \mathtt{11}+\mathtt{22}-\mathtt{12}-\mathtt{21}\right\rangle = 0. }\\[2ex]
\end{array}
\end{eqnarray*}
\end{example}

This second example is more generic and shows the property \eqref{itemf} in Proposition \ref{prop:propertiesMp}.
\begin{example}\label{Example2}
Let $X$ be any piecewise continuously differentiable path in $\mathbb{R}^2$. Consider the polynomial map $p:\mathbb{R}^2\longrightarrow \mathbb{R}^3$ given by $p(x,y) = (x^2,xy,y^2)$, and fix $k=2$. 

By Theorem \ref{thm: main}, the coefficient of $\w$ in $\sigma\left(p(X)\right)$ is given by the coefficient of $M_p(\w)$ in $\sigma(X)$. Moreover, by Proposition \ref{prop:propertiesMp} \eqref{itemf}, the words appearing in $M_p(\w)$ have length exactly 4. One way of storing $\sigma^{(2)}\left(p(X)\right)$ is using a matrix that encodes the coefficients in $M_p(\w)$, 
\begin{eqnarray*}
\begin{array}{r}
\sigma^{(2)}\left(p(X)\right) \\[2ex]
 \left[ \begin{array}{c} \mathtt{11} \\ \mathtt{12} \\ \mathtt{13} \\ \mathtt{21} \\ \mathtt{22} \\ \mathtt{23} \\ \mathtt{31} \\ \mathtt{32} \\ \mathtt{33}  \end{array} \right] \end{array} 
 \begin{array}{c}
\hspace{0.5cm} \\[2ex] \leftrightsquigarrow  \left[\begin{array}{cccccccccccccccc} 
 12 & 0 & 0 & 0 & 0 & 0 & 0 & 0 & 0 & 0 & 0 & 0 & 0 & 0 & 0 & 0 \\
 6 & 0 & 2 & 0 & 2 & 0 & 0 & 0 & 2 & 0 & 0 & 0 & 0 & 0 & 0 & 0 \\
0 & 0 & 0 & 4 & 0 & 4 & 0 & 0 & 0 & 4 & 0 & 0 & 0 & 0 & 0 & 0  \\
 0 & 0 & 4 & 0 & 4 & 0 & 0 & 0 & 4 & 0 & 0 & 0 & 0 & 0 & 0 & 0\\
 0 & 0 & 0 & 2 & 0 & 2 & 2 & 0 & 0 & 2 & 2 & 0 & 2 & 0 & 0 & 0\\ 
 0 & 0 & 0 & 0 & 0 & 0 & 0 & 4 & 0 & 0 & 0 & 4 & 0 & 4 & 0 & 0\\ 
0 & 0 & 0 & 0 & 0 & 0 & 4 & 0 & 0 & 0 & 4 & 0 & 4 & 0 & 0 & 0\\
 0 & 0 & 0 & 0 & 0 & 0 & 0 & 2 & 0 & 0 & 0 & 2 & 0 & 2 & 6 & 0\\
 0 & 0 & 0 & 0 & 0 & 0 & 0 & 0 & 0 & 0 & 0 & 0 & 0 & 0 & 0 & 12 \\
 \end{array}
  \right] \end{array}
  \begin{array}{l}
\sigma^{(4)}(X) \\[2ex] \left[  \begin{array}{c} \mathtt{1111} \\ \mathtt{1112} \\ \mathtt{1121}  \\ \mathtt{1122} \\ \mathtt{1211} \\ \mathtt{1212} \\ \mathtt{1221} \\ \mathtt{1222} \\ \mathtt{2111} \\ \mathtt{2112} \\ \mathtt{2121} \\ \mathtt{2122} \\ \mathtt{2211} \\ \mathtt{2212} \\  \mathtt{2221} \\ \mathtt{2222}  \end{array} \right].\end{array}
\end{eqnarray*}

More generally, by Proposition \ref{prop:propertiesMp} \eqref{itemf}, for any word $\w$ in the alphabet $\{\mathtt{1},\mathtt{2},\mathtt{3}\}$ with $\ell(\w)=k$, $M_p(\w)$ is a sum of words in the alphabet $\{\mathtt{1},\mathtt{2}\}$ of length $2k$. Applying Theorem \ref{thm: main}, the information of $\sigma^{(k)}\left(p(X)\right)$ can be stored in terms of $\sigma^{(2k)}(X)$. In fact, there exists a matrix $M$ of format $3^k \times 2^{2k}$ that describes the \emph{change of coordinates} in the following sense. Fix an order on the words, for instance, \emph{lexicographic order} as above. The coefficients of $\sigma^{(k)}(p(X))$ and the coefficients of $\sigma^{(2k)}(X)$ are related as 
\begin{eqnarray*}
\begin{array}{r}
\sigma^{(k)}\left(p(X)\right) \\[2ex]
 \left[ \begin{array}{c} \overbrace{\mathtt{11}\dots \mathtt{1}}^{k} \\ \mathtt{1}\dots \mathtt{12} \\ \vdots \\ \mathtt{3}\dots \mathtt{32} \\ \underbrace{\mathtt{3}\dots \mathtt{33}}_{k} \end{array} \right] \end{array} \begin{array}{c} \hspace{0.1cm} \\[2ex] = M \end{array}  \begin{array}{l}
 \sigma^{(2k)}(X) \\[2ex]
 \left[  \begin{array}{c} \overbrace{\mathtt{11}\dots \mathtt{1}}^{2k} \\ \mathtt{1}\dots \mathtt{12} \\ \vdots \\ \mathtt{2}\dots \mathtt{21} \\ \underbrace{\mathtt{2}\dots \mathtt{22}}_{2k} \end{array} \right],\end{array}
\end{eqnarray*}
where the row indexed by $\w$ in $M$ is given by the coefficients of $M_p(\w)$. 
Moreover, we want to point out the following properties of this homomorphism $M_p$:
\begin{itemize}
\item For $\w = \underbrace{\mathtt{1}\dots \mathtt{1}}_{k\ times}$, 
$\displaystyle{M_p(\w) = \frac{(2k)!}{k!} \cdot \w \itensor \w}$. 
\item For $\w = \underbrace{\mathtt{2}\dots \mathtt{2}}_{k\ times}$, 
$\displaystyle{M_p(\w) = k! \cdot  \underbrace{\mathtt{1}\dots \mathtt{1}}_{k\ times} \shuffle \underbrace{\mathtt{2}\dots \mathtt{2}}_{k\ times}}$.
\item For $\w = \underbrace{\mathtt{3}\dots \mathtt{3}}_{k\ times}$, 
$\displaystyle{M_p(\w) = \frac{(2k)!}{k!}\underbrace{\mathtt{2}\dots \mathtt{2}}_{2k\ times}}$. 
\end{itemize}
\end{example}
The rest of this section is dedicated to analize the consequences of Theorems \ref{thm: mainquestion} and \ref{thm: main} in several particular cases. 
We start looking at the case in which $X$ is itself a polynomial map and we need the following definition.
\begin{definition}
For an element $\la\in \mathrm{T}((\mathbb{R}^d))$, with zero coefficient for the empty word $\e$, we define the \emph{concatenation product exponential} of $\la$ as 
\begin{eqnarray*}
\exp_\itensor (\la) := \sum_{n\geq 0} \frac{\la^{\itensor n}}{n!}.
\end{eqnarray*}
\end{definition}
More information on this exponential map and its inverse, the \emph{logarithm}, can be found in \cite{Reu93}.

\begin{corollary}\label{cor:polynomialpath}
Let $X:[0,L] \longrightarrow \mathbb{R}^d$ be a polynomial map, with $L\in \mathbb{R}$, $L\geq 1$. Then, for any $\w \in \mathrm{T}(\mathbb{R}^d)$,
\begin{eqnarray*}
\left\langle \sigma(X), \w\right\rangle =\left\langle \exp_\itensor (L\cdot \mathtt{1}), M_{\tilde{X}}(\w)\right\rangle,
\end{eqnarray*}
where $\tilde{X}(y) = X(y)-X_0$. Equivalently, $\sigma(X) = M^*_{\tilde X}(\exp_\itensor(L\cdot \mathtt{1}))$.
\end{corollary}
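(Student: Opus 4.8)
The plan is to view the polynomial path $X$ itself as the image of a very simple base path under a polynomial map, and then invoke Theorem \ref{thm: main} (or rather Corollary \ref{cor: different origin}). Concretely, consider the one-dimensional path $U:[0,L]\to\mathbb{R}$ given by $U(t)=t$. Since $U$ is piecewise continuously differentiable with $U_0=0$, its signature is easy to compute directly: the only letter is $\mathtt{1}$, and $\langle\sigma(U),\mathtt{1}^{\itensor n}\rangle=\int_0^L\int_0^{r_n}\cdots\int_0^{r_2}dr_1\cdots dr_n=\frac{L^n}{n!}$, so that $\sigma(U)=\sum_{n\ge 0}\frac{L^n}{n!}\,\mathtt{1}^{\itensor n}=\exp_\itensor(L\cdot\mathtt{1})$. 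This identifies the right-hand side of the claimed identity as the signature of $U$.

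Next I would relate $X$ to $U$ via a polynomial map. Writing $X=(X^1,\dots,X^d)$ with each $X^i$ a polynomial in $t$, and setting $\tilde X(y)=X(y)-X_0$ as in the statement, we have $\tilde X(0)=0$ and $\tilde X(U(t))=\tilde X(t)=X(t)-X_0$. Thus the path $\tilde X\circ U$ is exactly $X$ translated to start at the origin; since the signature is invariant under translation of a path (it depends only on the increments $dX$), $\sigma(\tilde X(U))=\sigma(X)$. Here $\tilde X$ plays the role of the polynomial map ``$p$'' (from $\mathbb{R}^1$ to $\mathbb{R}^d$) and $U$ plays the role of the base path ``$X$'' in Theorem \ref{thm: main}; both hypotheses $U_0=0$ and $\tilde X(0)=0$ hold.

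Applying Theorem \ref{thm: main} with the substitution $(X,p)\leadsto(U,\tilde X)$ then gives, for every $\w\in\mathrm{T}(\mathbb{R}^d)$,
\begin{eqnarray*}
\langle\sigma(X),\w\rangle=\langle\sigma(\tilde X(U)),\w\rangle=\langle\sigma(U),M_{\tilde X}(\w)\rangle=\langle\exp_\itensor(L\cdot\mathtt{1}),M_{\tilde X}(\w)\rangle,
\end{eqnarray*}
which is precisely the asserted coefficientwise identity; the ``equivalently'' form $\sigma(X)=M_{\tilde X}^*(\exp_\itensor(L\cdot\mathtt{1}))$ is just the reformulation via the dual pairing, exactly as in the statement of Theorem \ref{thm: main}. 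If one prefers to avoid invoking translation invariance of the signature as a separate fact, one can instead apply Corollary \ref{cor: different origin} directly to the path $U$ and the map $X$ (not $\tilde X$): there $\tilde p(y)=X(y+U_0)-X(U_0)=X(y)-X_0=\tilde X(y)$ since $U_0=0$, and the corollary yields $\langle\sigma(X(U)),\w\rangle=\langle\sigma(U),M_{\tilde X}(\w)\rangle$ with $X(U)=X$ on the nose.

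I do not expect a genuine obstacle here; the only points requiring a line of care are (i) the explicit evaluation $\sigma(U)=\exp_\itensor(L\cdot\mathtt{1})$, which is the standard computation of the signature of a straight line and uses only that the iterated integral of the constant $1$ over the simplex $\{0\le r_1\le\cdots\le r_n\le L\}$ has volume $L^n/n!$, and (ii) making sure the hypothesis $L\ge 1$ is actually irrelevant to the identity itself (it is; $L\ge 1$ is presumably only mentioned so that ``polynomial path'' is unambiguous or for later use) — or, if $L\ge 1$ is needed elsewhere, simply carrying it along. The role of the condition $p(0)=0$ in Theorem \ref{thm: main} is discharged by passing from $X$ to $\tilde X$, which is exactly why the statement is phrased in terms of $\tilde X$ rather than $X$.
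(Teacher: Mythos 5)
Your proposal is correct and matches the paper's own proof: the paper likewise takes the straight-line path $Y(t)=t$ on $[0,L]$ with $\sigma(Y)=\exp_\itensor(L\cdot\mathtt{1})$ and applies Corollary \ref{cor: different origin} to $Y$ and the polynomial map $X$, which produces exactly $\tilde{p}=\tilde{X}$ as in your last remark. The extra details you supply (the $L^n/n!$ simplex computation and the translation-invariance discussion) are fine but not needed beyond what the paper states.
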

\begin{proof}
Let $Y:[0,L]\longrightarrow \mathbb{R}$ be the path given by $Y(t) = t$. Then, $\sigma(Y) = \exp_\itensor (L\cdot \mathtt{1})$. The statement follows by Corollary \ref{cor: different origin} applied to the path $Y$ and the polynomial map $X$. 
\end{proof}

The next result looks at the case when $M_p^*(\sigma(X))=0$ from the perspective of the polynomial map and of the piecewise continuously differentiable path. We introduce first two concepts. 
Given a polynomial map $p$, we define the \emph{ideal generated by $p$} as the ideal $I_p$ generated by the polynomials that define the map $p$, i.e.  $I_p = \left\langle p_1,\dots, p_m\right\rangle$. 
Moreover, we define a \emph{tree-like path} as a path $X$ such that $\sigma(X)  = 0$. This definition is the characterization given by B.M. Hambly and T. J. Lyons, \cite{HL10}, and a more general topological definition can be found in \cite[Definition~1.1]{BGLY16}.
\begin{remark}
A very simple example of a tree-like path is a concatenation of paths $A\sqcup B\sqcup C \sqcup D$ such that the paths A and B (resp.\ C and D) are of the same shape, but parametrized in the opposite direction. When we compute the integrals on such a path, we get cancellations and the signature of the path does not see the $A\sqcup B\sqcup C\sqcup D$ loop, i.e.\ $\sigma(A\sqcup B\sqcup C\sqcup D)=\mathbf{e}$.
\end{remark}
The following result describes the situation in which the (image of the) path lies in the zeros of the ideal $I_p$, for some polynomial map $p$.
\begin{corollary}\label{cor:imagepath}
Let $p: \mathbb{R}^d \longrightarrow \mathbb{R}^m$ be a polynomial map with associated ideal $I_p$. We define the polynomial map $\tilde{p}(y)= p(y+X_0) - p(X_0)$, for which $\tilde{p}(0)=0$. 
\begin{itemize}
\item If $X:[0,L]\longrightarrow \mathbb{R}^d$ is a piecewise continuously differentiable path such that $X(t)\in \mathcal{V}(I_p)$ for all $t\in [0,L]$, then $M^*_{\tilde{p}}(\sigma(X)) = \mathbf{e}$.
\item Conversely, if $M^*_{\tilde{p}}(\sigma(X)) = \mathbf{e}$, for some piecewise continuously differentiable path $X:[0,L]\longrightarrow \mathbb{R}^d$, then $p(X)$ is tree-like.
\end{itemize} 
\end{corollary}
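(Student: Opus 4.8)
The plan is to derive both bullets from the equivalent formulation of Corollary~\ref{cor: different origin}. For any piecewise continuously differentiable path $X:[0,L]\to\mathbb{R}^d$ and any polynomial map $p:\mathbb{R}^d\to\mathbb{R}^m$, writing $\tilde{p}(y)=p(y+X_0)-p(X_0)$, that corollary gives $\langle\sigma(p(X)),\w\rangle=\langle\sigma(X),M_{\tilde{p}}(\w)\rangle$ for all $\w$; since the right-hand side equals $\langle M^*_{\tilde{p}}(\sigma(X)),\w\rangle$ by definition of the adjoint and $\w$ is arbitrary, this is exactly the identity $M^*_{\tilde{p}}(\sigma(X))=\sigma(p(X))$ in $\mathrm{T}((\mathbb{R}^m))$. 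Everything then reduces to a statement about $\sigma(p(X))$.

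For the first bullet, I would first unwind the hypothesis. Since $I_p=\langle p_1,\dots,p_m\rangle$, a real point lies in $\mathcal{V}(I_p)$ precisely when it is annihilated by every $p_i$, so $\mathcal{V}(I_p)=p^{-1}(0)$; hence $X(t)\in\mathcal{V}(I_p)$ for all $t\in[0,L]$ is equivalent to $p_i(X(t))=0$ for all $i$ and all $t$, i.e.\ the image path $p(X)$ is the constant path at the origin of $\mathbb{R}^m$. A constant path has vanishing velocity, so every iterated integral of positive length is zero and $\sigma(p(X))=\mathbf{e}$. Combining with the identity above, $M^*_{\tilde{p}}(\sigma(X))=\sigma(p(X))=\mathbf{e}$.

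For the second bullet, assume $M^*_{\tilde{p}}(\sigma(X))=\mathbf{e}$. The same identity gives $\sigma(p(X))=M^*_{\tilde{p}}(\sigma(X))=\mathbf{e}$, which is precisely the condition that $p(X)$ be tree-like.

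The argument is short because Corollary~\ref{cor: different origin} supplies the only nontrivial input; the points that need a moment of care are the identification $\mathcal{V}(I_p)=p^{-1}(0)$ (so that the first hypothesis genuinely forces $p\circ X$ to be constant, not merely to have trivial signature) and the elementary computation of the signature of a constant path. It is also worth noting why the converse is not stated more strongly, say as ``$X(t)\in\mathcal{V}(I_p)$ for all $t$'': as in the Remark preceding the statement, $p\circ X$ can be tree-like purely through cancellation along a loop without $X$ ever entering the variety, so the asymmetry between the two bullets is unavoidable.
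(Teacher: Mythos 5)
Your argument is correct and is exactly the intended one: the paper states this corollary without a separate proof, treating it as an immediate consequence of Corollary~\ref{cor: different origin} (giving $M^*_{\tilde{p}}(\sigma(X))=\sigma(p(X))$), the observation that $X(t)\in\mathcal{V}(I_p)=p^{-1}(0)$ makes $p(X)$ constant with trivial signature, and the definition of tree-like as having trivial signature. Your added remarks on $\mathcal{V}(I_p)=p^{-1}(0)$ and on why the converse cannot be strengthened are consistent with the paper's surrounding discussion.
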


The last consequence is that the dual map $M_p^*$ behaves nicely with respect to the concatenation of signatures. 
\begin{corollary}\label{cor:dualmap}
Let $p:\mathbb{R}^d \longrightarrow \mathbb{R}^m$ be a polynomial map with $p(0)=0$ and let $X,Y:[0,L]\longrightarrow \mathbb{R}^d$ be two piecewise continuously differentiable paths with $X_0=Y_0=0$. Then, 
$$
M_p^*(\sigma(X) \itensor\sigma(Y)) = M_p^*(\sigma(X)) \itensor M_q^*(\sigma(Y)),$$
where $ q(y)=p(y+X_L-X_0)-p(X_L-X_0)$.
\end{corollary}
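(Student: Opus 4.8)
The plan is to reduce everything to Chen's identity, Proposition \ref{prop:chensidentity}, combined with Theorem \ref{thm: main}. First I would note that the concatenation path $X \sqcup Y$ is well-defined and, by Chen's identity, $\sigma(X \sqcup Y) = \sigma(X) \itensor \sigma(Y)$. The left-hand side $M_p^*(\sigma(X) \itensor \sigma(Y))$ thus equals $M_p^*(\sigma(X \sqcup Y))$, which by Theorem \ref{thm: main} is exactly $\sigma(p(X \sqcup Y))$. So the whole statement amounts to showing that $\sigma(p(X \sqcup Y)) = M_p^*(\sigma(X)) \itensor M_q^*(\sigma(Y))$, and again by Theorem \ref{thm: main} the first factor on the right is $\sigma(p(X))$.

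The key geometric observation is then the following: the path $p(X \sqcup Y)$ is (up to a tree-like / basepoint adjustment that does not affect the signature) the concatenation of $p(X)$ with the path $t \mapsto p(Y_t + X_L - X_0) - p(X_L - X_0)$, shifted appropriately. Concretely, on $[0,L]$ the path $X \sqcup Y$ is just $X$, so $p(X \sqcup Y)$ restricted there is $p(X)$. On $[L,2L]$, the path $X \sqcup Y$ takes the value $Y_{t-L} - Y_0 + X_L$; since we have arranged $X_0 = Y_0 = 0$, this is $Y_{t-L} + X_L$. Applying $p$ gives $p(Y_{t-L} + X_L)$. Comparing with the definition of $q$, namely $q(y) = p(y + X_L - X_0) - p(X_L - X_0) = p(y + X_L) - p(X_L)$, we see that $p(Y_{t-L} + X_L) = q(Y_{t-L}) + p(X_L)$, i.e. the second leg of $p(X \sqcup Y)$ is exactly $q(Y)$ translated by the constant $p(X_L)$. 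Since the signature is invariant under such a constant translation and $p(X)_L = p(X_L) = p(X_L) - p(X_0)$ (using $p(0) = 0$, $X_0 = 0$), the path $p(X \sqcup Y)$ is precisely the concatenation $p(X) \sqcup q(Y)$ in the sense of the concatenation-of-paths definition.

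With that identification in hand, a second application of Chen's identity gives $\sigma(p(X \sqcup Y)) = \sigma(p(X) \sqcup q(Y)) = \sigma(p(X)) \itensor \sigma(q(Y))$, and then Theorem \ref{thm: main} applied to each factor — to $p$ and $X$ for the first, to $q$ and $Y$ for the second (note $q(0) = 0$ and $Y_0 = 0$, so the hypotheses hold) — yields $M_p^*(\sigma(X)) \itensor M_q^*(\sigma(Y))$, as desired.

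The main obstacle is the bookkeeping in the second paragraph: one must check carefully that the parametrization shifts, the subtraction of initial values in the concatenation definition, and the affine shift built into the definition of $q$ all line up so that $p(X \sqcup Y)$ literally is the concatenation $p(X) \sqcup q(Y)$, rather than merely equal to it after some further tree-like correction. The hypothesis $X_0 = Y_0 = 0$ is what makes this clean; without it one would need the more general Corollary \ref{cor: different origin} and an extra basepoint adjustment. Everything else is a direct invocation of Proposition \ref{prop:chensidentity} and Theorem \ref{thm: main}.
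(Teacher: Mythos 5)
Your proof is correct: the verification that $p(X\sqcup Y)$ literally equals $p(X)\sqcup q(Y)$ (using $X_0=Y_0=0$, $p(0)=0$, and $q(0)=p(X_L)-p(X_L)=0$), followed by two applications of Chen's identity (Proposition \ref{prop:chensidentity}) and Theorem \ref{thm: main} applied to the pairs $(p,X)$ and $(q,Y)$, gives exactly the stated identity. The paper states Corollary \ref{cor:dualmap} without proof, and your argument is precisely the intended derivation, with the bookkeeping in your second paragraph (in particular that the second leg of $p(X)\sqcup q(Y)$ is $q(Y_{t-L})-q(Y_0)+p(X_L)=q(Y_{t-L})+p(X_L)$) carried out correctly, so no tree-like correction or appeal to translation invariance is even needed.
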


\section{Applications and future work}\label{sec:5}

The results presented in this paper solve an algebraic question that arises from the signatures of paths, an object commonly studied in stochastic analysis. There are several interesting problems that do not fit on the algebraic flavour of this paper. We summarize them in the following list.
\begin{enumerate}
\item[(A)] In comparison with the results presented in \cite{PSS18}, we would like to explore a non-linear version of their approach using dictionaries. The idea is that if we have a family of generic paths, $\chi$, for which we know the signature and a polynomial map $p$, then Theorem \ref{thm: main} allows us to compute the signature of all the paths in $p(\chi)$. 

For instance, Example \ref{Example2} shows that we can compute the signature of $p(X)$ for any piecewise continuously differentiable path $X$ by multiplying the signature of $X$ by a matrix at each level. Therefore, we have the following question:
\begin{center}
\emph{Is it possible to understand $\sigma(p(X))$ in terms of $\sigma(X)$ and $M_p$ \\ in the language of tensors?}
\end{center}


\item[(B)] Another line of future research is focused on the map $M_p$. Since it is defined from a polynomial map without involving any piecewise continuously differentiable path and gives us the commuting diagram \eqref{diagram}, we intuit that it is worth to look for more interesting properties. For that, we should look to the \emph{big picture} involving the Hopf algebra structures, as well as other constructions. 

In this direction, at the end of Example \ref{Example2} we describe combinatorially $M_p(\w)$ for some particular words $\w$. A more general question would be the following:
 \begin{quote}
For which words $\w$ and polynomial maps $p$ is there a \emph{non-recursive} combinatorial formula for $M_p(\w)$?
 \end{quote}
Answering this question could be very useful from the computational perspective. 
\end{enumerate}

\section*{Acknowledgements}
The authors would like to thank Bernd Sturmfels, Francesco Galuppi, Joscha Diehl, Mateusz Micha\l{}ek, and Max Pfeffer for their fruitful conversations and observations. 
The collaboration between the co-authors would not have been possible without the financial support from the research institute MPI-MiS Leipzig (Germany). R.P. is currently supported by European Research Council through CoG-683164.

 \bibliographystyle{alpha}
 \bibliography{biblio}

\begin{thebibliography}{BGLY16}

\bibitem[AFS19]{AFS18}
C.~Am{\'e}ndola, P.~Friz, and B.~Sturmfels.
\newblock Varieties of signature tensors.
\newblock {\em Forum of Mathematics, Sigma}, 7:e10, 2019.

\bibitem[Bal13]{B13}
J.~S. Balakrishnan.
\newblock Iterated {C}oleman integration for hyperelliptic curves.
\newblock In {\em A{NTS} {X}---{P}roceedings of the {T}enth {A}lgorithmic
  {N}umber {T}heory {S}ymposium}, volume~1 of {\em Open Book Ser.}, pages
  41--61. Math. Sci. Publ., Berkeley, CA, 2013.

\bibitem[BGLY16]{BGLY16}
H.~Boedihardjo, X.~Geng, T.~Lyons, and D.~Yang.
\newblock The signature of a rough path: Uniqueness.
\newblock {\em Advances in Mathematics}, 293:720--737, April 2016.

\bibitem[Che57]{C57}
K.-T. Chen.
\newblock Integration of paths, geometric invariants and a generalized
  {B}aker-{H}ausdorff formula.
\newblock {\em Ann. of Math. (2)}, 65:163--178, 1957.

\bibitem[DR19]{DR18}
J.~Diehl and J.~Reizenstein.
\newblock Invariants of multidimensional time series based on their
  iterated-integral signature.
\newblock {\em Acta Applicandae Mathematicae}, 164(1):83--122, 2019.

\bibitem[EM53]{EM53}
S.~Eilenberg and S.~{Mac Lane}.
\newblock On the {Groups} {$H(\Pi,n)$}, {I}.
\newblock {\em Annals of Mathematics}, 58(1):55--106, 1953.

\bibitem[FH14]{FH14}
P.~K. Friz and M.~Hairer.
\newblock {\em A course on rough paths}.
\newblock Universitext. Springer, Cham, 2014.
\newblock With an introduction to regularity structures.

\bibitem[FP13]{FP12}
L.~Foissy and F.~Patras.
\newblock Natural endomorphisms of shuffle algebras.
\newblock {\em Internat. J. Algebra Comput.}, 23(4):989--1009, 2013.

\bibitem[FV10]{FV10}
P.~K. Friz and N.~B. Victoir.
\newblock {\em Multidimensional stochastic processes as rough paths}, volume
  120 of {\em Cambridge Studies in Advanced Mathematics}.
\newblock Cambridge University Press, Cambridge, 2010.
\newblock Theory and applications.

\bibitem[Gal19]{G18}
Francesco Galuppi.
\newblock The rough veronese variety.
\newblock {\em Linear algebra and its applications}, 583:282--299, 2019.

\bibitem[GK08]{GK}
E.~Gehrig and M.~Kawski.
\newblock A hopf-algebraic formula for compositions of noncommuting flows.
\newblock 2008.

\bibitem[Hai02]{H02}
R.~Hain.
\newblock Iterated integrals and algebraic cycles: examples and prospects.
\newblock In {\em Contemporary trends in algebraic geometry and algebraic
  topology ({T}ianjin, 2000)}, volume~5 of {\em Nankai Tracts Math.}, pages
  55--118. World Sci. Publ., River Edge, NJ, 2002.

\bibitem[HL10]{HL10}
B.~Hambly and T.~Lyons.
\newblock Uniqueness for the signature of a path of bounded variation and the
  reduced path group.
\newblock {\em Ann. of Math. (2)}, 171(1):109--167, 2010.

\bibitem[Kap09]{K09}
M.~Kapranov.
\newblock Noncommutative geometry and path integrals.
\newblock In {\em Algebra, arithmetic, and geometry: in honor of {Y}u. {I}.
  {M}anin. {V}ol. {II}}, volume 270 of {\em Progr. Math.}, pages 49--87.
  Birkh{\"a}user Boston, Inc., Boston, MA, 2009.

\bibitem[Lod95]{L95}
J.-L. Loday.
\newblock Cup-product for leibniz cohomology and dual leibniz algebras.
\newblock {\em Mathematica Scandinavica}, 77(2):189--196, 1995.

\bibitem[PSS19]{PSS18}
M.~Pfeffer, A.~Seigal, and B.~Sturmfels.
\newblock Learning paths from signature tensors.
\newblock {\em SIAM Journal on Matrix Analysis and Applications},
  40(2):394--416, 2019.

\bibitem[Ree58]{Ree58}
R.~Ree.
\newblock Lie elements and an algebra associated with shuffles.
\newblock {\em Ann. of Math. (2)}, 68:210--220, 1958.

\bibitem[Reu93]{Reu93}
C.~Reutenauer.
\newblock {\em Free Lie Algebras}, volume~7 of {\em London Mathematical Society
  monographs, new series}.
\newblock Clarendon Press, Oxford University Press, Oxford, 1993.

\bibitem[Sch58]{S58}
M.~P. Sch{\"u}tzenberger.
\newblock Sur une propri{\'e}t{\'e} combinatoire des alg{\`e}bres de lie libres
  pouvant {\^e}tre utilis{\'e}e dans un probl{\`e}me de math{\'e}matiques
  appliqu{\'e}es.
\newblock S{\'e}minaire Dubreil. Alg{\`e}bres et th{\'e}orie des nombres,
  Volume 12 (1958-1959) no. 1, Talk no. 1, p. 1-23, 1958.

\end{thebibliography}
\end{document}